\documentclass[11pt,fleqn]{article}

\usepackage{amsfonts}
\usepackage{amsmath}
\usepackage{amssymb}
\usepackage{amsthm}
\usepackage{bbm}
\usepackage{bm}
\usepackage{booktabs}
\usepackage{cases}
\usepackage{cite}
\usepackage{graphicx}
\usepackage{indentfirst}
\usepackage{longtable}
\usepackage{makecell}
\usepackage{mathrsfs}
\usepackage{titlesec}
\usepackage{rotating}
\usepackage[T1]{fontenc}
\usepackage[centerlast]{caption}

\setlength{\textwidth}{6.3in}%%
\setlength{\textheight}{9.87in}%%
\setlength{\topmargin}{0pt}%%
\setlength{\headsep}{-0.4in}%%
\setlength{\headheight}{0pt}%%
\setlength{\oddsidemargin}{0pt}%%
\setlength{\evensidemargin}{0pt}%%
\setlength{\parindent}{3.5ex}%%
\setlength{\parskip}{0pt}%%

\allowdisplaybreaks

%%%%%%%%%%%%%%%%%%%%%%%%%%%%%%%%%%%%%%%%%%%%%%%%%%%%%%
%AAAAAAAAAAAAAAAAAAAAAAAAAAAAAAAAAAAAAAAAAAAAAAAAAAAA%
%%%%%%%%%%%%%%%%%%%%% numbering %%%%%%%%%%%%%%%%%%%%%%

\numberwithin{equation}{section}

%%%%%%%%%%%%%%%%%%%%%%%%%%%%%%%%%%%%%%%%%%%%%%%%%%%%%%
%BBBBBBBBBBBBBBBBBBBBBBBBBBBBBBBBBBBBBBBBBBBBBBBBBBBB%
%%%%%%%%%%%%%%%% theorems and items %%%%%%%%%%%%%%%%%%

%\theoremstyle{plain}%default
\newtheorem{theorem}{Theorem}[section]
\newtheorem{lemma}[theorem]{Lemma}

\newtheorem{corollary}[theorem]{Corollary}

\theoremstyle{definition}

\newtheorem{example}{Example}[section]

%%%%%%%%%%%%%%%%%%%%%%%%%%%%%%%%%%%%%%%%%%%%%%%%%%%%%%
%CCCCCCCCCCCCCCCCCCCCCCCCCCCCCCCCCCCCCCCCCCCCCCCCCCCC%
%%%%%%%%%%%%%%%%%%%%%% title %%%%%%%%%%%%%%%%%%%%%%%%%

\titleformat{\section}
{\normalfont\normalsize\bf}{\thesection.}{6pt}{}
\titleformat{\subsection}
{\normalfont\normalsize}{\thesubsection.}{6pt}{}
\titleformat{\subsubsection}
{\normalfont\normalsize}{\thesubsubsection.}{6pt}{}

%%%%%%%%%%%%%%%%%%%%%%%%%%%%%%%%%%%%%%%%%%%%%%%%%%%%%%
%DDDDDDDDDDDDDDDDDDDDDDDDDDDDDDDDDDDDDDDDDDDDDDDDDDDD%
%%%%%%%%%%%%%%%%%%%%% commands %%%%%%%%%%%%%%%%%%%%%%%

\newcommand{\citu}[2]{\!\!\cite[#2]{#1}}

\newcommand{\bibb}[1]{\left\{#1\right\}}

\newcommand{\smbb}[1]{\left(#1\right)}

\newcommand{\binq}[2]{\genfrac{[}{]}{0mm}{0}{#1}{#2}}
\newcommand{\tbnq}[2]{\genfrac{[}{]}{0mm}{1}{#1}{#2}}

%%%%%%%%%%%%%%%%%%%%%%%%%%%%%%%%%%%%%%%%%%%%%%%%%%%%%%
%CCCCCCCCCCCCCCCCCCCCCCCCCCCCCCCCCCCCCCCCCCCCCCCCCCCC%
%%%%%%%%%%%%%%% hypergeometric series %%%%%%%%%%%%%%%%

\newcommand{\al}{\alpha}
\newcommand{\be}{\beta}
\newcommand{\ga}{\gamma}

\newcommand{\la}{\lambda}
\newcommand{\si}{\sigma}
\newcommand{\vep}{\varepsilon}
\newcommand{\ze}{\zeta}

\newcommand{\ud}{\mathrm{d}}
\newcommand{\uud}{\,\mathrm{d}}

\newcommand{\uue}{\,\mathrm{e}}

\newcommand{\Lii}{\,\mathrm{Li}}

\newcommand{\ol}{\overline}

\newcommand{\bms}{\bm{S}}
\newcommand{\bmt}{\bm{T}}
\newcommand{\bmu}{\bm{U}}

\DeclareMathOperator*{\cat}{\mathbf{Cat}}

\makeatletter
\newcommand{\tmod}[1]{{\@displayfalse\pmod{#1}}}
\makeatother

%%%%%%%%%%%%%%%%%%%%%%%%%%%%%%%%%%%%%%%%%%%%%%%%%%%%%%
%EEEEEEEEEEEEEEEEEEEEEEEEEEEEEEEEEEEEEEEEEEEEEEEEEEEE%
%%%%%%%%%%%%%%%%%%%% references %%%%%%%%%%%%%%%%%%%%%%

\makeatletter

\newdimen\bibspace
\setlength\bibspace{0pt}
\renewenvironment{thebibliography}[1]{%
 \section*{\refname %or \bibname if you use ``book'' as the documentclass
       \@mkboth{\MakeUppercase\refname}{\MakeUppercase\refname}}%
     \list{\@biblabel{\@arabic\c@enumiv}}%
          {\settowidth\labelwidth{\@biblabel{#1}}%
           \leftmargin\labelwidth
           \advance\leftmargin\labelsep
           \itemsep\bibspace
           \parsep\z@skip     %
           \@openbib@code
           \usecounter{enumiv}%
           \let\p@enumiv\@empty
           \renewcommand\theenumiv{\@arabic\c@enumiv}}%
     \sloppy\clubpenalty4000\widowpenalty4000%
     \sfcode`\.\@m}
    {\def\@noitemerr
      {\@latex@warning{Empty `thebibliography' environment}}%
     \endlist}

\makeatother

%%%%%%%%%%%%%%%%%%%%%%%%%%%%%%%%%%%%%%%%%%%%%%%%%%%%%%
%%%%%%%%%%%%%%%%%%%%%%%%%%%%%%%%%%%%%%%%%%%%%%%%%%%%%%
%%%%%%%%%%%%%%%%%%%%%%%%%%%%%%%%%%%%%%%%%%%%%%%%%%%%%%
%%%%%%%%%%%%%%%%%%%%%%%%%%%%%%%%%%%%%%%%%%%%%%%%%%%%%%
%%%%%%%%%%%%%%%%%%%%%%%%%%%%%%%%%%%%%%%%%%%%%%%%%%%%%%

\begin{document}

\title{\bf\boldmath{Alternating multiple zeta values, and explicit formulas of some Euler-Ap\'{e}ry-type series}}
\author{
{
Weiping Wang$^{a,\,}$\thanks{E-mail\,:
wpingwang@yahoo.com, wpingwang@zstu.edu.cn (Weiping Wang).}
\quad
Ce Xu$^{b,\,c,\,}$\thanks{Corresponding author. E-mail\,:
19020170155420@stu.xmu.edu.cn, 9ma18001g@math.kyushu-u.ac.jp (Ce Xu).}
}\\[1mm]
\small a. School of Science, Zhejiang Sci-Tech University, Hangzhou 310018, P.R. China\\
\small b. Multiple Zeta Research Center, Kyushu University,
    Motooka, Nishi-ku, Fukuoka 819-0389, Japan\\
\small c. School of Mathematical Sciences, Xiamen University, Xiamen 361005, P.R. China}

\date{}
\maketitle

\vspace{-0.5cm}
\begin{center}
\parbox{6.3in}{\small{\bf Abstract}\vspace{3pt}

\hspace{3.5ex}In this paper, we study some Euler-Ap\'{e}ry-type series which involve central binomial coefficients and (generalized) harmonic numbers. In particular, we establish elegant explicit formulas of some series by iterated integrals and alternating multiple zeta values. Based on these formulas, we further show that some other series are reducible to $\ln(2)$, zeta values, and alternating multiple zeta values by considering the contour integrals related to gamma functions, polygamma functions and trigonometric functions. The evaluations of a large number of special Euler-Ap\'{e}ry-type series are presented as examples.
}

\vspace{6pt}
\parbox{6.3in}{\small{\emph{AMS classification\,:}}\,\,
65B10; 11B65; 11M32}

\vspace{3pt}
\parbox{6.3in}{\small{\emph{Keywords\,:}}\,\,
Alternating multiple zeta values; Euler-Ap\'{e}ry-type series; Central binomial coefficients; Harmonic numbers}
\end{center}

%%%%%%%%%%%%%%%%%%%%%%%%%%%%%%%%%%%%%%%%%%%%%%%%%%%%%%%%%%%%%%%%%%%
%%%%%%%%%%%%%%%%%%%%%%%%%%%%%%%%%%%%%%%%%%%%%%%%%%%%%%%%%%%%%%%%%%%
%%%%%%%%%%%%%%%%%%%%%%%%%%%%%%%%%%%%%%%%%%%%%%%%%%%%%%%%%%%%%%%%%%%
%%%%%%%%%%%%%%%%%%%%%%%%%%%%%%%%%%%%%%%%%%%%%%%%%%%%%%%%%%%%%%%%%%%
%%%%%%%%%%%%%%%%%%%%%%%%%%%%%%%%%%%%%%%%%%%%%%%%%%%%%%%%%%%%%%%%%%%

\setcounter{tocdepth}{2}
\tableofcontents

%%%%%%%%%%%%%%%%%%%%%%%%%%%%%%%%%%%%%%%%%%%%%%%%%%%%%%
%%%%%%%%%%%%%%%%%%%%%%%%%%%%%%%%%%%%%%%%%%%%%%%%%%%%%%
%%%%%%%%%%%%%%%%%%%%%%%%%%%%%%%%%%%%%%%%%%%%%%%%%%%%%%
%%%%%%%%%%%%%%%%%%%%%%%%%%%%%%%%%%%%%%%%%%%%%%%%%%%%%%
%%%%%%%%%%%%%%%%%%%%%%%%%%%%%%%%%%%%%%%%%%%%%%%%%%%%%%

\section{Introduction}\label{Sec.intro}

Infinite series involving central binomial coefficients and (generalized) harmonic numbers play an important role in many fields, such as analysis of algorithms, combinatorics, number theory and elementary particle physics. Therefore, they have attracted wide attention for a long time.

In 1979, Ap\'{e}ry \cite{Apery79} proved the irrationality of $\ze(3)$ by using the series involving the central binomial coefficients:
\[
\sum_{n=1}^\infty\frac{(-1)^{n-1}}{n^3\binom{2n}{n}}=\frac{2}{5}\ze(3)\,.
\]
He also considered the following analogous one:
\[
\sum_{n=1}^\infty\frac{1}{n^2\binom{2n}{n}}=\frac{1}{3}\ze(2)\,,
\]
which has been known since the nineteenth century. The \emph{Ap\'{e}ry-like series} were later investigated systematically in, for example, \cite{Abling17,AlGra99,BaiBorBr06,BorBK01,ChenKW19,Chu17.HAAS,ChuZh09,DK2004,
HPHP10,JeKaVe03,KWY2007,Leh1985,Lesh81,vdPoor79,Wein04,Zuck85} and the references therein, and a plenty of conjectural Ap\'{e}ry-like series were presented by Sun \cite{Sun14.LCSP,Sun15.NSSS}. It should be noted that, besides the central binomial coefficients, many Ap\'{e}ry-like series also involve the (generalized) harmonic numbers.

The \emph{Euler sums} are a kind of infinite series involving products of (generalized) harmonic numbers, which can be traced back to Goldbach and Euler (see Berndt \cite[p. 253]{Berndt85.1}), and have the general form
\[
\sum_{n=1}^{\infty}\frac{H_n^{(i_1)}H_n^{(i_2)}\cdots H_n^{(i_k)}}{n^q}\,,
\]
where $1\leq i_1\leq i_2\leq\cdots\leq i_k$ and $q\geq 2$. Some well-known works on the Euler sums can be found in, for example, the works due to Bailey et al. \cite{BaiBG94.EM}, Borwein et al. \cite{BorBG95.PEM}, and Flajolet and Salvy \cite{FlSa98}, and some most recent progresses on the Euler sums were given in \cite{WangLyu18.ESSS,Xu17.MZVES,XuWang19.EFES}.

Additionally, for some contributions on various other infinite series involving central binomial coefficients and (generalized) harmonic numbers, the readers are referred to \cite{Boyad12,Boyad17,Camp18,ChenH16,CopCan15,WangChu19} and the references therein.

It is known that some algorithms and program packages such as those developed by Ablinger \cite{Abling17,Abling14} and Weinzierl \cite{Wein04}, can be used to compute \emph{Euler-Ap\'{e}ry-type series} as well as some other infinite binomial and inverse binomial sums. Moreover, explicit formulas for some general Euler-Ap\'{e}ry-type series were established in, for example, \cite{ChenKW19,DK2004,JeKaVe03,KWY2007}. In particular, Kalmykov et al. \cite{KWY2007} showed that all the Euler-Ap\'{e}ry-type series of the form
\[
\sum_{n=1}^\infty
    \frac{z^nH_n^{(i_1)}H_n^{(i_2)}\cdots H_n^{(i_k)}}{n^p\binom{2n}{n}^{j}}\,,
\]
where $j=\pm1$ and $|z|\leq 4^j$, can be expressed in terms of multiple polylogarithms, though they did not present explicitly the formulas of the coefficients in the expressions. However, to the best of our knowledge, in the literature, there are only a few works on the establishment of explicit formulas for general Euler-Ap\'{e}ry-type series directly in terms of (alternating) multiple zeta values. One of the recent such works was due to Chen \cite{ChenKW19}, who used the generalized Arakawa-Kaneko zeta functions and multiple zeta values to establish the explicit formulas of some general Euler-Ap\'{e}ry-type series.

Inspired by the works referred to above, in this paper, by using the methods of iterated integrals and contour integrals, we study some Euler-Ap\'{e}ry-type series of the forms
\begin{equation}\label{EAS.def}
\bms_{i_1i_2\ldots i_k,p}:=\sum_{n=1}^\infty
    \frac{H_n^{(i_1)}H_n^{(i_2)}\cdots H_n^{(i_k)}\binom{2n}{n}}{4^n n^p}\,,\quad
\tilde{\bms}_{i_1i_2\ldots i_k,p}:=\sum_{n=1}^\infty
    \frac{4^nH_n^{(i_1)}H_n^{(i_2)}\cdots H_n^{(i_k)}}{n^p\binom{2n}{n}}\,,
\end{equation}
and of some other similar forms, and establish the corresponding explicit formulas in terms of alternating multiple zeta values. The paper is organized as follows.

In Section \ref{Sec.Int.MZV}, we study the Euler-Ap\'{e}ry-type series by iterated integrals, and establish the elegant explicit formulas for the series
\begin{equation}\label{sums.1234}
\sum_{n=1}^{\infty}\frac{\binom{2n}{n}}{4^nn^p}\,,\quad
\sum_{n=1}^{\infty}\frac{H_n^{(m)}\binom{2n}{n}}{4^nn^p}\,,\quad
\sum_{n=1}^{\infty}\frac{H_nH_n^{(m)}\binom{2n}{n}}{4^nn^p}\,,\quad
\sum_{n=1}^{\infty}\frac{H_n^3\binom{2n}{n}}{4^nn^p}\,,
\end{equation}
and
\begin{equation}\label{sums.56}
\sum_{n=1}^\infty\frac{\ze_n^\star(\{1\}_m)\binom{2n}{n}}{4^nn^p}\,,\quad
\sum_{n=1}^\infty\frac{H_n\ze_n^\star(\{1\}_m)\binom{2n}{n}}{4^nn^p}\,,
\end{equation}
for $m,p\geq1$. As a result, all these series can be expressed directly in terms of alternating multiple zeta values. Some representative examples are as follows:
\begin{align*}
&\sum_{n=1}^\infty\frac{\binom{2n}{n}}{4^n n^{p+1}}
    =-2\ze(\bar{1},\{\hat{1}\}_p)\,,\quad
\sum_{n=1}^\infty\frac{H_n\binom{2n}{n}}{4^n n^{p+1}}
    =-2\ze(\bar{1},\tilde{1},\{\hat{1}\}_p)+2\ze(\tilde{2},\{\hat{1}\}_p)\,,\\
&\sum_{n=1}^\infty\frac{H_n\ze_n^\star(\{1\}_m)\binom{2n}{n}}{4^n n}
    =-2^{m+1}\sum_{k=1}^{m+1}\ze(\bar{k},\widetilde{m+2-k})+2^{m+1}(m+1)\ze(\widetilde{m+2})\,,
\end{align*}
where the related notations and definitions can be found in Section \ref{Sec.MZV}.

In Section \ref{Sec.Con.Int}, we study the Euler-Ap\'{e}ry-type series by contour integral representations and residue computation. More precisely, in Section \ref{Sec.Lem}, we show some lemmas, and in Sections \ref{Sec.Sq.tSq} -- \ref{Sec.tS1q}, by computing the contour integrals related to gamma functions, polygamma functions and trigonometric functions, we establish the expressions of the Euler-Ap\'{e}ry-type series
\begin{equation}\label{sums.789.10}
\sum_{n=1}^\infty\frac{4^n}{n^p\binom{2n}{n}}\,,\quad
\sum_{n=1}^\infty\frac{4^nH_n}{n^p\binom{2n}{n}}\,,\quad
\sum_{n=1}^\infty\frac{4^nH_{2n}}{n^p\binom{2n}{n}}\,,\quad
\sum_{n=1}^\infty\frac{4^nO_n}{n^p\binom{2n}{n}}\,,
\end{equation}
for $p\geq 2$, and
\begin{equation}\label{sums.11.12}
\sum_{n=1}^{\infty}\frac{\binom{2n}{n}}{4^nn^p}\,,\quad
\sum_{n=1}^\infty\frac{H_{2n}\binom{2n}{n}}{4^nn^p}\,,\quad
\sum_{n=1}^\infty\frac{O_n\binom{2n}{n}}{4^nn^p}\,,\quad
\end{equation}
for $p\geq 1$, where $O_n=\sum_{k=1}^n\frac{1}{2k-1}$ are the \emph{odd harmonic numbers}. As a result, we show that the single sums $\bms_{p}=\sum_{n=1}^{\infty}4^{-n}n^{-p}\binom{2n}{n}$ can be expressed in terms of $\ln(2)$ and Riemann zeta values, and all the other sums are reducible to (alternating) multiple zeta values.

Finally, in Section \ref{Sec.Remark}, we introduce briefly the Maple program, which is based on the explicit formulas established in this paper and the \emph{multiple zeta value data mine} due to Bl\"{u}mlein et al. \cite{BBV2010}, and can be used to compute the corresponding Euler-Ap\'{e}ry-type series automatically. Some further remarks are also given there. Moreover, we list all the evaluations of the Euler-Ap\'{e}ry-type series in (\ref{sums.1234}) -- (\ref{sums.11.12}) and of weight $w\leq 6$ in \cite{WangXu19.MP.ESEA}.

Note that the main purpose of this paper is to establish the explicit formulas for some general Euler-Ap\'{e}ry-type series with parameters, so that we can use these formulas to compute the corresponding series immediately. Some special series obtained in this paper by specifying the parameters are not new, and we have tried to list the related sources.

%%%%%%%%%%%%%%%%%%%%%%%%%%%%%%%%%%%%%%%%%%%%%%%%%%%%%%
%%%%%%%%%%%%%%%%%%%%%%%%%%%%%%%%%%%%%%%%%%%%%%%%%%%%%%
%%%%%%%%%%%%%%%%%%%%%%%%%%%%%%%%%%%%%%%%%%%%%%%%%%%%%%
%%%%%%%%%%%%%%%%%%%%%%%%%%%%%%%%%%%%%%%%%%%%%%%%%%%%%%
%%%%%%%%%%%%%%%%%%%%%%%%%%%%%%%%%%%%%%%%%%%%%%%%%%%%%%

\section{Evaluations via iterated integrals and alternating MZVs}\label{Sec.Int.MZV}

In this section, we firstly introduce the (alternating) multiple harmonic sums, the (alternating) multiple zeta values, and related identities, which are important to our study. Next, we establish the explicit formulas of the Euler-Ap\'{e}ry-type series listed in (\ref{sums.1234}) and (\ref{sums.56}) by iterated integrals and alternating multiple zeta values.

%%%%%%%%%%%%%%%%%%%%%%%%%%%%%%%%%%%%%%%%%%%%%%%%%%%%%%
%%%%%%%%%%%%%%%%%%%%%%%%%%%%%%%%%%%%%%%%%%%%%%%%%%%%%%
%%%%%%%%%%%%%%%%%%%%%%%%%%%%%%%%%%%%%%%%%%%%%%%%%%%%%%
%%%%%%%%%%%%%%%%%%%%%%%%%%%%%%%%%%%%%%%%%%%%%%%%%%%%%%
%%%%%%%%%%%%%%%%%%%%%%%%%%%%%%%%%%%%%%%%%%%%%%%%%%%%%%

\subsection{Multiple zeta values and related identities}\label{Sec.MZV}

For positive integers $s_1,s_2,\ldots,s_m$, the \emph{multiple harmonic sums} (MHSs) $\ze_n(s_1,s_2,\ldots,s_m)$ are defined by
\[
\ze_n(s_1,s_2,\ldots,s_m):=
    \sum_{n\geq n_1>n_2>\cdots>n_m\geq 1}\frac{1}{n_1^{s_1}n_2^{s_2}\cdots n_m^{s_m}}\,.
\]
The quantity $m$ is called the \emph{depth} of a multiple harmonic sum, and the quantity $w:=s_1+\cdots+s_m$ is called the \emph{weight}. By convention, $\ze_n(s_1,s_2,\ldots,s_m)=0$ for $n<m$, $\ze_n(\emptyset)=1$, and we denote $m$ repetitions of a substring by $\{\cdots\}_m$, with $\{\cdots\}_0:=\emptyset$. When taking the limit $n\rightarrow\infty$, we get the so-called \emph{multiple zeta values} (MZVs):
\begin{equation}\label{def.MZV}
\ze(s_1,s_2,\ldots,s_m)=\lim_{n\to\infty}\ze_n(s_1,s_2,\ldots,s_m)
    =\sum_{n_1>n_2>\cdots>n_m\geq 1}\frac{1}{n_1^{s_1}n_2^{s_2}\cdots n_m^{s_m}}\,,
\end{equation}
defined for $s_1>1$ to ensure the convergence of the series. Additionally, similarly to MHSs, denote the \emph{multiple harmonic star sums} $\ze_n^\star(s_1,s_2,\ldots,s_m)$ by
\[
\ze_n^\star(s_1,s_2,\ldots,s_m):=
    \sum_{n\geq n_1\geq n_2\geq\cdots\geq n_m\geq 1}
    \frac{1}{n_1^{s_1}n_2^{s_2}\cdots n_m^{s_m}}\,.
\]

The \emph{alternating multiple harmonic sums} are defined by
\[
\ze_n(s_1,s_2,\ldots,s_m;\si_1,\si_2,\ldots,\si_m)=\sum_{n\geq n_1>n_2>\cdots>n_m\geq 1}
    \frac{\si_1^{n_1}\si_2^{n_2}\cdots \si_m^{n_m}}{n_1^{s_1}n_2^{s_2}\cdots n_m^{s_m}}\,,
\]
where $s_j$ are positive integers, $\si_j=\pm1$, for $j=1,2,\ldots,m$, with $(s_1,\si_1)\neq(1,1)$. The limit cases of alternating MHSs give rise to \emph{alternating multiple zeta values}
\begin{equation}\label{def.AMZV}
\ze(s_1,s_2,\ldots,s_m;\si_1,\si_2,\ldots,\si_m)
    =\lim_{n\to\infty}\ze_n(s_1,s_2,\ldots,s_m;\si_1,\si_2,\ldots,\si_m)\,.
\end{equation}
As usual, when writing alternating MHSs and MZVs, we shall combine the strings of exponents and signs into a single string, with $s_j$ in the $j$th position when $\si_j=+1$, and $\bar{s}_j$ in the $j$th position when $\si_j=-1$. For example,
\[
\ze(\bar{s}_1,s_2,\ldots,\bar{s}_m)=\sum_{n_1>n_2>\cdots>n_m\geq 1}
    \frac{(-1)^{n_1+n_m}}{n_1^{s_1}n_2^{s_2}\cdots n_m^{s_m}}\,.
\]
In particular, it is known that $\ze_n(\bar{s}):=-\bar{H}_n^{(s)}$ are the alternating harmonic numbers, and
\[
\ze(\bar{s})=\sum_{n=1}^{\infty}\frac{(-1)^n}{n^s}=(2^{1-s}-1)\ze(s)\,,
\]
with $\ze(\bar{1})=-\ln(2)$. Moreover, for brevity, we also use the symbols
\[
\ze_n(\sigma_1s_1,\sigma_2s_2,\ldots,\sigma_ms_m)\quad\text{and}\quad \ze(\sigma_1s_1,\sigma_2s_2,\ldots,\sigma_ms_m)
\]
in the sequel to denote the alternating MHSs and MZVs. Henceforth, if $\sigma_j=-1$, then we use ${\bar s_j}$ to replace $-s_j$. As an instance, $\ze(-2,3,-1,4)=\ze(\bar{2},3,\bar{1},4)$.

The systematic study of MZVs and alternating MZVs began in the early 1990s with the works of Hoffman \cite{Hoff92} and Zagier \cite{Zag92}, and rational relations among alternating MZVs are tabulated in the multiple zeta value data mine up to the weight 12 by Bl\"{u}mlein et al. \cite{BBV2010}.

Now, we present some identities on (alternating) MHSs, which will be frequently used in this paper. According to \cite[Section 1]{KuPro10} and \cite[Theorem 2.5]{Xu17.MZVES}, the following generating function related to the unsigned Stirling numbers of the first kind $\tbnq{n}{k}$ and the MHSs holds:
\begin{equation}\label{gf.Stir}
\frac{1}{k!}(-\ln(1-t))^k
    =\sum_{n=1}^{\infty}\binq{n}{k}\frac{t^n}{n!}
    =\sum_{n=1}^{\infty}\ze_{n-1}(\{1\}_{k-1})\frac{t^n}{n}\,,\quad\text{for }k\geq 1\,.
\end{equation}
By the definition of MHSs, we have
\begin{align}
\ze_{n-1}(s_1,s_2,\ldots,s_m)
    &=\ze_n(s_1,s_2,\ldots,s_m)-\frac{1}{n^{s_1}}\ze_{n-1}(s_2,s_3,\ldots,s_m)\nonumber\\
    &=\cdots=\sum_{l=1}^{m+1}\frac{(-1)^{l-1}}{n^{s_1+s_2+\cdots+s_{l-1}}}
        \ze_n(s_l,s_{l+1},\ldots,s_m)\,,\label{rec.MHS}
\end{align}
for $n,m\geq 1$, where, by convention, $s_1+\cdots+s_{l-1}:=0$ if $l=1$. Moreover, we find that
\begin{align}
&\ze_n(s_1,s_2,\ldots,s_m)\nonumber\\
&\quad=2^{s_1+s_2+\cdots+s_m-m}\sum_{2n\geq n_1>n_2>\cdots>n_m\geq1}
    \frac{(1+(-1)^{n_1})(1+(-1)^{n_2})\cdots(1+(-1)^{n_m})}
        {n_1^{s_1}n_2^{s_2}\cdots n_m^{s_m}}\nonumber\\
&\quad=2^{s_1+s_2+\cdots+s_m-m}\sum_{\substack{\si_j\in\{\pm 1\}\\j=1,2,\ldots,m}}
    \ze_{2n}(s_1\si_1,s_2\si_2,\ldots,s_m\si_m)\,.\label{zen.ze2n}
\end{align}
For convenience, in (\ref{def.MZV}), if there is a ``$1+(-1)^{n_j}$'' (or ``$1-(-1)^{n_j}$'') in the numerator of the summand, we put a ``hat'' (or ``tilde'') on the top of $s_j$. For example,
\[
\ze(\hat{s}_1,s_2,\tilde{s}_3,\bar{s}_4)
    =\sum_{n_1>n_2>n_3>n_4\geq 1}
    \frac{1+(-1)^{n_1}}{n_1^{s_1}}\cdot\frac{1}{n^2}\cdot\frac{1-(-1)^{n_3}}{n_3^{s_3}}
    \cdot\frac{(-1)^{n_4}}{n_4^{s_4}}\,,
\]
and we call it a \emph{mixed multiple zeta value}. It is clear that all mixed MZVs can be expressed as rational linear combinations of (alternating) MZVs. In particular, we have
\[
\ze(\tilde{s}_1,\tilde{s}_2,\ldots,\tilde{s}_m)=2^m t(s_1,s_2,\ldots,s_m)\,,
\]
where $t(s_1,s_2,\ldots,s_m)$ are the \emph{multiple $t$-values} or the \emph{Hoffman $t$-values} \cite{Hoff16}, defined by
\begin{align*}
t(s_1,s_2,\ldots,s_m):
&=\sum_{\substack{n_1>n_2>\cdots>n_m\geq 1\\n_i\ {\rm odd}}}
    \frac{1}{n_1^{s_1}n_2^{s_2}\cdots n_m^{s_m}}\nonumber\\
&=\sum_{n_1>n_2>\cdots>n_m\geq 1}\frac{1}{(2n_1-1)^{s_1}(2n_2-1)^{s_2}\cdots(2n_m-1)^{s_m}}\,.
\end{align*}

%%%%%%%%%%%%%%%%%%%%%%%%%%%%%%%%%%%%%%%%%%%%%%%%%%%%%%
%%%%%%%%%%%%%%%%%%%%%%%%%%%%%%%%%%%%%%%%%%%%%%%%%%%%%%
%%%%%%%%%%%%%%%%%%%%%%%%%%%%%%%%%%%%%%%%%%%%%%%%%%%%%%
%%%%%%%%%%%%%%%%%%%%%%%%%%%%%%%%%%%%%%%%%%%%%%%%%%%%%%
%%%%%%%%%%%%%%%%%%%%%%%%%%%%%%%%%%%%%%%%%%%%%%%%%%%%%%

\subsection{Single sums $\bms_p$}\label{Sec.Sp}

Firstly, we give a lemma, by which an integral can be transformed into a summation:

\begin{lemma}\label{Lem.int.Inm}
For integers $n\geq 1$ and $m\geq 0$, the following integral satisfies
\[
I(n,m):=\int_0^1 t^{n-1}\ln^m(t)\ln\left(\frac{2}{1+t}\right)\ud t
    =(-1)^mm!\sum_{j=1}^{m+1}\frac{(-1)^n}{n^{m+2-j}}\{\ze_n(\bar j)-\ze(\bar j)\}\,.
\]
\end{lemma}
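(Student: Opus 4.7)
My plan is to reduce $I(n,m)$ to a manageable double series by Taylor-expanding the logarithmic kernel and integrating term-by-term. Using $\ln\!\left(\frac{2}{1+t}\right)=\ln(2)+\sum_{k=1}^\infty\frac{(-1)^k t^k}{k}$, which has uniformly bounded partial sums on $[0,1]$ so Fubini applies, together with the elementary moment identity $\int_0^1 t^{a-1}\ln^m(t)\uud t=(-1)^m m!/a^{m+1}$, I expect to arrive at
$$I(n,m)=(-1)^m m!\left[\frac{\ln(2)}{n^{m+1}}+\sum_{k=1}^\infty\frac{(-1)^k}{k(n+k)^{m+1}}\right].$$

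The next step is a partial fraction expansion of the kernel inside the sum. Iterating the one-step telescoping identity $\frac{1}{k(n+k)^j}=\frac{1}{nk(n+k)^{j-1}}-\frac{1}{n(n+k)^j}$ down to $j=1$ gives
$$\frac{1}{k(n+k)^{m+1}}=\frac{1}{n^{m+1}k}-\sum_{j=1}^{m+1}\frac{1}{n^{m+2-j}(n+k)^j}.$$
Summing the first term against $(-1)^k/k$ over $k\ge 1$ produces the constant $-\ln(2)/n^{m+1}$, which is precisely designed to annihilate the explicit $\ln(2)$ contribution above; this cancellation is the decisive algebraic simplification and is the reason the answer is a clean combination of $\ze_n(\bar j)$ and $\ze(\bar j)$ with no stray logarithms.

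Finally, the shift $l=n+k$ in each of the remaining tails $\sum_{k\ge 1}(-1)^k/(n+k)^j$ rewrites them as $(-1)^n\sum_{l>n}(-1)^l/l^j=(-1)^n[\ze(\bar j)-\ze_n(\bar j)]$, using that $(-1)^k=(-1)^{n+l}$ after the shift and that $\ze_n(\bar j)=\sum_{l=1}^n(-1)^l/l^j$ by the sign conventions of Section~\ref{Sec.MZV}. Collecting the prefactor $-(-1)^m m!/n^{m+2-j}$ with this tail expression and absorbing the overall minus sign into $\ze_n(\bar j)-\ze(\bar j)$ yields the stated formula. The only care required is bookkeeping of two independent sign conventions -- the alternating Taylor series $(-1)^k$ and the sign inside $\ze_n(\bar j)$ -- and verifying the cancellation of the $\ln(2)$ term; no genuine analytic difficulty arises, so I would expect the main obstacle to be purely notational.
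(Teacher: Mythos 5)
Your proposal is correct and follows essentially the same route as the paper: expand the logarithm, integrate term by term via $\int_0^1 t^{a-1}\ln^m(t)\,\ud t=(-1)^mm!/a^{m+1}$, apply the partial fraction decomposition of $1/(k(n+k)^{m+1})$, cancel the $\ln(2)$ contribution, and identify the shifted tails with $(-1)^n\{\ze(\bar j)-\ze_n(\bar j)\}$. The only cosmetic difference is that the paper first splits $\ln\frac{2}{1+t}=\ln 2-\ln(1+t)$ and expands $\ln(1+t)$, which amounts to the identical computation.
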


\begin{proof}
Expanding $\ln(1+t)$, using the integral
\begin{equation}\label{int.tn.lnm.1}
\int_0^1t^{n-1}\ln^m(t)\uud t=\frac{(-1)^mm!}{n^{m+1}}\,,
\end{equation}
and then performing partial fraction decomposition, we have
\begin{align*}
\int_0^1t^{n-1}\ln^m(t)\ln(1+t)\uud t
    &=(-1)^mm!\sum_{k=1}^\infty\frac{(-1)^{k-1}}{k(k+n)^{m+1}}\\
    &=(-1)^mm!\left\{\frac{1}{n^{m+1}}\ln(2)+\sum_{j=1}^{m+1}
        \frac{1}{n^{m+2-j}}\sum_{k=1}^{\infty}\frac{(-1)^k}{(k+n)^j}\right\}\,,
\end{align*}
which, together with the definitions of alternating harmonic numbers and zeta values, gives the desired result.
\end{proof}

In particular, we have
\[
I(n,0)=\frac{(-1)^{n+1}}{n}(\ze(\bar{1})-\ze_n(\bar{1}))\,.
\]
By this lemma, the explicit formula of the single sums can be established.

\begin{theorem}\label{Th.Sp}
For integer $p\geq 0$, the single sums $\bms_{p+1}$ satisfy
\begin{align}\label{Sp}
\bms_{p+1}:=\sum_{n=1}^\infty\frac{\binom{2n}{n}}{4^n n^{p+1}}
    =-2\ze(\bar{1},\{\hat{1}\}_p)\,.
\end{align}
\end{theorem}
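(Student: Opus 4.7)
Plan. I express $\bms_{p+1}$ as a single integral using the generating function for $\binom{2n}{n}/(4^n n)$, reduce the integrand to the kernel of Lemma~\ref{Lem.int.Inm} via the substitution $s=1-t^2$, and then expand through identity~(\ref{gf.Stir}) and Lemma~\ref{Lem.int.Inm} to obtain a nested series matching the unfolded definition of $-2\ze(\bar 1, \{\hat 1\}_p)$.

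First, the standard closed form $\sum_{n\ge 1}\binom{2n}{n}s^n/(4^n n)=2\ln(2/(1+\sqrt{1-s}))$, combined with the Mellin formula $1/n^p=\frac{(-1)^{p-1}}{(p-1)!}\int_0^1 s^{n-1}\ln^{p-1}(s)\,ds$ (for $p\ge 1$) and the substitution $s=1-t^2$, gives
\[
\bms_{p+1}=\frac{2(-1)^{p-1}}{(p-1)!}\int_0^1\frac{2t\,\ln^{p-1}(1-t^2)}{1-t^2}\ln\!\left(\frac{2}{1+t}\right)dt,
\]
whose kernel $\ln(2/(1+t))$ exactly matches Lemma~\ref{Lem.int.Inm}. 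For $p\ge 2$, identity~(\ref{gf.Stir}) with $t\mapsto t^2$ yields $\frac{(-1)^{p-1}}{(p-1)!}\ln^{p-1}(1-t^2)=\sum_{n\ge 1}\ze_{n-1}(\{1\}_{p-2})t^{2n}/n$. Inserting this and expanding $2t^{2n+1}/(1-t^2)=2\sum_{k\ge 0}t^{2n+2k+1}$ reduces every resulting integral to $\int_0^1 t^{2k+1}\ln(2/(1+t))\,dt=I(2k+2,0)=(\ln 2-\bar{H}_{2k+2})/(2k+2)$ by Lemma~\ref{Lem.int.Inm}. Interchanging summations and applying the elementary identity $\ze_{m-1}(\{1\}_{p-1})=\sum_{n=1}^{m-1}\ze_{n-1}(\{1\}_{p-2})/n$ (immediate from the definition of MHSs) then produces
\[
\bms_{p+1}=2\sum_{m\ge 1}\frac{\ln 2-\bar{H}_{2m}}{m}\,\ze_{m-1}(\{1\}_{p-1}).
\]

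To finish, I expand $\ze(\bar 1,\{\hat 1\}_p)$ directly from the definition: the factors $1+(-1)^{n_j}$ force each $n_j$ ($j\ge 1$) to be even, so the substitution $n_j=2m_j$ and the tail evaluation $\sum_{n_0>2m_1}(-1)^{n_0}/n_0=\bar{H}_{2m_1}-\ln 2$ give $\ze(\bar 1,\{\hat 1\}_p)=\sum_{m\ge 1}(\bar{H}_{2m}-\ln 2)\ze_{m-1}(\{1\}_{p-1})/m$, which is exactly $-\bms_{p+1}/2$. The boundary case $p=0$ is immediate from $\bms_1=2\ln 2=-2\ze(\bar 1)$, and $p=1$ follows by the same argument with the trivial expansion $\ln^0(1-t^2)=1$, bypassing~(\ref{gf.Stir}).

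The main technical obstacle is the Fubini-type handling of the double sum in the second paragraph: to guarantee absolute convergence one must use the even-geometric expansion $2t^{2n+1}/(1-t^2)=2\sum_{k\ge 0}t^{2n+2k+1}$ throughout, rather than splitting $2t/(1-t^2)=1/(1-t)-1/(1+t)$, which would produce tails that are only conditionally convergent when paired with $\ln(2/(1+t))$. Once these bookkeeping points are settled, the identification $\bms_{p+1}=-2\ze(\bar 1,\{\hat 1\}_p)$ is direct.
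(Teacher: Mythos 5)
Your argument is correct, and its first half coincides with the paper's: the same generating function (\ref{gf.cb1}), the same Mellin kernel $\frac{1}{n^p}=\frac{(-1)^{p-1}}{(p-1)!}\int_0^1 s^{n-1}\ln^{p-1}(s)\,\ud s$, and the same reduction (via (\ref{gf.Stir}) and the even geometric expansion) to
$\bms_{p+1}=4\sum_{n\geq1}\ze_{n-1}(\{1\}_{p-1})I(2n,0)
=2\sum_{m\geq1}\frac{\ln 2-\bar{H}_{2m}}{m}\,\ze_{m-1}(\{1\}_{p-1})$;
the interchanges are indeed harmless because $\ln\smbb{\frac{2}{1+t}}\geq0$ on $[0,1]$ makes every term nonnegative, so Tonelli applies. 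Where you genuinely depart from the paper is the final identification. The paper works \emph{forward}: it expands $\ze_{n-1}(\{1\}_{p-1})$ by the recurrence (\ref{rec.MHS}), converts each $\ze_n$ into a sum of alternating MHSs at index $2n$ via (\ref{zen.ze2n}), applies Lemma \ref{Lem.int.Inm}, and then collapses the resulting sums over signs $\si_j\in\{\pm1\}$ by a telescoping argument in $l$, arriving at $-2\sum_{\si_j}\ze(\bar{1},\si_0,\ldots,\si_{p-1})=-2\ze(\bar{1},\{\hat{1}\}_p)$. You instead work \emph{backward} from the target: unfolding the definition of the mixed MZV $\ze(\bar{1},\{\hat{1}\}_p)$, the factors $1+(-1)^{n_j}$ force the inner indices to be even, and the tail evaluation $\sum_{n_0>2m}(-1)^{n_0}/n_0=\bar{H}_{2m}-\ln 2$ reproduces your series exactly. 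This is shorter and eliminates the sign bookkeeping and telescoping entirely; what the paper's longer route buys is a template that is reused essentially verbatim in the harder Theorems \ref{Th.Smp} and \ref{Th.S*mp}, where a direct unfolding of the target would be less transparent. Your closing caveat about conditional convergence from the split $\frac{2t}{1-t^2}=\frac{1}{1-t}-\frac{1}{1+t}$ is a non-issue (neither you nor the paper uses that split), but it does no harm.
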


\begin{proof}
From \cite{ChenH16,Leh1985}, it is known that
\begin{equation}\label{gf.cb1}
\sum_{n=1}^\infty\frac{\binom{2n}{n}}{4^n n}t^n
    =2\ln\left(\frac{2}{1+\sqrt{1-t}}\right)\,.
\end{equation}
Multiplying it by $\frac{\ln^{p-1}(t)}{t}$, for $p\geq 1$, integrating over $(0,1)$, and using Eq. (\ref{int.tn.lnm.1}) yield
\[
\sum_{n=1}^\infty\frac{\binom{2n}{n}}{4^n n^{p+1}}
=\frac{(-1)^{p-1}}{(p-1)!}\cdot2\int_0^1\frac{\ln^{p-1}(1-t)}{1-t}
    \ln\left(\frac{2}{1+\sqrt{t}}\right)\uud t\,.
\]
When $p\geq 2$, by (\ref{gf.Stir}), we have
\begin{align*}
\sum_{n=1}^\infty\frac{\binom{2n}{n}}{4^n n^{p+1}}
&=2\sum_{n=1}^\infty\frac{\ze_{n-1}(\{1\}_{p-2})}{n}
    \int_0^1\frac{t^n}{1-t}\ln\smbb{\frac{2}{1+\sqrt{t}}}\uud t\\
&=4\sum_{n=1}^\infty\sum_{k=0}^\infty\frac{\ze_{n-1}(\{1\}_{p-2})}{n}
    \int_0^1t^{2n+2k+1}\ln\smbb{\frac{2}{1+t}}\uud t\\
&=4\sum_{n=1}^\infty\ze_{n-1}(\{1\}_{p-1})\int_0^1 t^{2n-1}\ln\smbb{\frac{2}{1+t}}\uud t\,,
\end{align*}
where the last term also holds for $p=1$. Next, using Eqs. (\ref{rec.MHS}) and (\ref{zen.ze2n}), and applying Lemma \ref{Lem.int.Inm}, we further transform the last term to
\begin{align}
\sum_{n=1}^\infty\frac{\binom{2n}{n}}{4^n n^{p+1}}
&=4\sum_{l=1}^p(-1)^{l-1}\sum_{\substack{\sigma_j\in\{\pm 1\}\\j=1,2,\ldots,p-l}}
    \sum_{n=1}^\infty\frac{1}{n^{l-1}}\ze_{2n}(\si_1,\si_2,\ldots,\si_{p-l})I(2n,0)\nonumber\\
&=\sum_{l=1}^p(-1)^{l-1}2^l\sum_{\substack{\si_j\in\{\pm 1\}\\j=1,2,\ldots,p-l}}
    \sum_{n=1}^\infty\frac{1+(-1)^n}{n^{l-1}}\ze_n(\si_1,\si_2,\ldots,\si_{p-l})I(n,0)\nonumber\\
&=\sum_{l=1}^p(-2)^l\sum_{\substack{\si_j\in\{\pm 1\}\\j=0,1,\ldots,p-l}}
    \sum_{n=1}^\infty\frac{\si_0^n}{n^l}\ze_n(\si_1,\si_2,\ldots,\si_{p-l})
    (\ze(\bar{1})-\ze_n(\bar{1}))\nonumber\\
&=\sum_{l=1}^{p-1}(-2)^l\sum_{\substack{\si_j\in\{\pm 1\}\\j=0,1,\ldots,p-l}}
    \bibb{
        \begin{array}{c}
            \ze(\bar{1},l\si_0,\si_1,\si_2,\ldots,\si_{p-l})\\
            +\ze(\bar{1},(l+1)\si_0\si_1,\si_2,\ldots,\si_{p-l})
        \end{array}
    }\nonumber\\
&\quad+(-2)^p(\ze(\bar{1},p)+\ze(\bar{1},\bar{p}))\,.\label{Sp.temp1}
\end{align}
Since
\begin{align*}
&(-2)^l\sum_{\substack{\si_j\in\{\pm 1\}\\j=0,1,\ldots,p-l}}
    \ze(\bar{1},(l+1)\si_0\si_1,\si_2,\ldots,\si_{p-l})\\
&\quad=(-2)^l\cdot2\sum_{\substack{\tau_j\in\{\pm 1\}\\j=0,1,\ldots,p-l-1}}
    \ze(\bar{1},(l+1)\tau_0,\tau_1,\ldots,\tau_{p-l-1})\,,
\end{align*}
by telescoping, (\ref{Sp.temp1}) can be simplified as
\[
\sum_{n=1}^\infty\frac{\binom{2n}{n}}{4^n n^{p+1}}
=-2\sum_{\substack{\si_j\in\{\pm 1\}\\j=0,1,\ldots,p-1}}
    \ze(\bar{1},\si_0,\si_1,\ldots,\si_{p-1})
=-2\ze(\bar{1},\{\hat{1}\}_p)\,.
\]
Thus, Eq. (\ref{Sp}) holds for $p=1$. Setting $t=1$ in (\ref{gf.cb1}) gives $\bms_1=2\ln(2)=-2\ze(\bar{1})$, which means that (\ref{Sp}) also holds for $p=0$. Hence, we obtain this theorem.
\end{proof}

\begin{example}
Setting $p=1$ in Theorem \ref{Th.Sp} yields
\[
\sum_{n=1}^\infty\frac{\binom{2n}{n}}{4^n n^2}
    =-2\{\ze(\bar{1},1)+\ze(\bar{1},\bar{1})\}=\ze(2)-2\ln^2(2)\,,
\]
which was given by Boyadzhiev \cite[Section 3]{Boyad12} and Sun \cite[Remark 5.2]{Sun15.ANSR}. Similarly, we have
\begin{align*}
\sum_{n=1}^\infty\frac{\binom{2n}{n}}{4^n n^3}
    &=2\ze(3)-2\ze(2)\ln(2)+\frac{4}{3}\ln^3(2)\,,\\
\sum_{n=1}^\infty\frac{\binom{2n}{n}}{4^n n^4}
    &=\frac{9}{4}\ze(4)-4\ze(3)\ln(2)+2\ze(2)\ln^2(2)-\frac{2}{3}\ln^4(2)\,.
\end{align*}
\end{example}

%%%%%%%%%%%%%%%%%%%%%%%%%%%%%%%%%%%%%%%%%%%%%%%%%%%%%%
%%%%%%%%%%%%%%%%%%%%%%%%%%%%%%%%%%%%%%%%%%%%%%%%%%%%%%
%%%%%%%%%%%%%%%%%%%%%%%%%%%%%%%%%%%%%%%%%%%%%%%%%%%%%%
%%%%%%%%%%%%%%%%%%%%%%%%%%%%%%%%%%%%%%%%%%%%%%%%%%%%%%
%%%%%%%%%%%%%%%%%%%%%%%%%%%%%%%%%%%%%%%%%%%%%%%%%%%%%%

\subsection{Linear sums $\bms_{m,p}$}

For convenience, denote
\[
\int_0^1 f_1(t)\uud tf_2(t)\uud t\cdots f_k(t)\uud t
    :=\int\limits_{0<t_k<\cdots<t_1<1}f_1(t_1)f_2(t_2)\cdots f_k(t_k)
    \uud t_1\uud t_2\cdots\uud t_k\,.
\]
Now, let us establish the explicit formula of the linear sums $\bms_{m,p}$.

\begin{theorem}\label{Th.Smp}
For integers $m\geq 1$ and $p\geq 0$, the linear sums $\bms_{m+1,p+1}$ satisfy
\begin{equation}\label{Smp}
\bms_{m+1,p+1}:=\sum_{n=1}^\infty\frac{H_n^{(m+1)}\binom{2n}{n}}{4^n n^{p+1}}
    =4\ze(\bar{1},\{\hat{1}\}_p,\hat{2},\{\hat{1}\}_{m-1})
        -2\ze(m+1)\ze(\bar{1},\{\hat{1}\}_p)\,.
\end{equation}
\end{theorem}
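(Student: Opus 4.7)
My approach parallels the proof of Theorem \ref{Th.Sp}, grafting in the harmonic factor via the integral representation
\[
H_n^{(m+1)} \;=\; \zeta(m+1)-\frac{(-1)^m}{m!}\int_0^1 \frac{t^n\ln^m(t)}{1-t}\uud t \qquad (m\geq 1),
\]
which follows from $1/(1-t)=\sum_{k\geq 0}t^k$ together with (\ref{int.tn.lnm.1}). Substituting into the definition of $\bms_{m+1,p+1}$ and exchanging sum with integral yields the split
\[
\bms_{m+1,p+1} \;=\; \zeta(m+1)\,\bms_{p+1}\;-\;\frac{(-1)^m}{m!}\int_0^1 \frac{\ln^m(t)}{1-t}\,G_{p+1}(t)\uud t,
\]
where $G_{p+1}(t):=\sum_{n=1}^\infty \binom{2n}{n}\,t^n/(4^n n^{p+1})$. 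Theorem \ref{Th.Sp} identifies the first summand with $-2\zeta(m+1)\zeta(\bar{1},\{\hat{1}\}_p)$, matching the second term of (\ref{Smp}); so it remains to show that the integral contributes exactly $4\zeta(\bar{1},\{\hat{1}\}_p,\hat{2},\{\hat{1}\}_{m-1})$.

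To avoid circularity (expanding $G_{p+1}$ as its defining power series merely reproduces the splitting identity tautologically), I would use a $t$-parametric closed form. Multiplying (\ref{gf.cb1}), with $t$ replaced by $tx$, by $\ln^{p-1}(x)/x$ and integrating over $x\in(0,1)$ yields, for $p\geq 1$,
\[
G_{p+1}(t) \;=\; \frac{2(-1)^{p-1}}{(p-1)!}\int_0^1 \frac{\ln^{p-1}(x)}{x}\ln\!\left(\frac{2}{1+\sqrt{1-tx}}\right)\uud x.
\]
Plugging this into the split produces a double integral in $(t,x)$. The change of variable $tx\mapsto u^2$---precisely the substitution that converted $\ln(2/(1+\sqrt{t}))$ into $\ln(2/(1+t))$ in the proof of Theorem \ref{Th.Sp}---combined with the expansion of $\ln^m(t)/(1-t)$ via (\ref{gf.Stir}), should reduce the integrand to a kernel of the form $\int_0^1 u^{2n-1}\ln^m(u)\ln(2/(1+u))\uud u$, which is exactly $I(2n,m)$ in Lemma \ref{Lem.int.Inm}. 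The lemma then supplies the contributions $\zeta_n(\bar{j})-\zeta(\bar{j})$ for $1\leq j\leq m+1$, while (\ref{rec.MHS}) and (\ref{zen.ze2n}) convert the resulting $\zeta_{2n}$ multi-harmonic sums into alternating $\zeta_n$ sums with hat and bar markings.

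The concluding step is telescoping over all sign choices $\si_i\in\{\pm 1\}$, carried out exactly as in the closing calculation of Theorem \ref{Th.Sp}, but now with two coupled sign-chains running in parallel: the chain from the $\ln^{p-1}(x)$ layer collapses into the block $\{\hat{1}\}_p$ (just as before), while the chain from Lemma \ref{Lem.int.Inm} coupled with the $\ln^m(t)$ expansion should collapse into the block $\hat{2},\{\hat{1}\}_{m-1}$. The surviving slot of weight $2$, rather than $1$, arises because the sum $\sum_{j=1}^{m+1}\tfrac{(-1)^n}{n^{m+2-j}}(\zeta_n(\bar{j})-\zeta(\bar{j}))$ in Lemma \ref{Lem.int.Inm} telescopes down to a single boundary index after the sign summation. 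The main obstacle will be the bookkeeping: the $(-1)^m$ from $\ln^m(t)$, the $(-1)^{p-1}$ from $\ln^{p-1}(x)$, the factors of $2$ from (\ref{zen.ze2n}) and (\ref{gf.cb1}), and the boundary MZV tail (analogous to the $\zeta(\bar{1},p)+\zeta(\bar{1},\bar{p})$ terms in the earlier proof) must recombine cleanly into the overall coefficient $+4$ and the advertised multi-index $(\bar{1},\{\hat{1}\}_p,\hat{2},\{\hat{1}\}_{m-1})$.
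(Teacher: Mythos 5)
Your opening move is exactly the paper's: both proofs use (\ref{int.tn.lnm.2}) to write $\bms_{m+1,p+1}-\ze(m+1)\bms_{p+1}$ as $-\frac{(-1)^m}{m!}\int_0^1\frac{\ln^m(t)}{1-t}G_{p+1}(t)\uud t$, and the first term of (\ref{Smp}) then follows from Theorem \ref{Th.Sp}. The gap is in the reduction of the remaining integral, which is the entire content of the theorem, and the mechanism you sketch for it is internally inconsistent. You propose to expand $\ln^m(t)/(1-t)$ ``via (\ref{gf.Stir})'' \emph{and} to end with the kernel $I(2n,m)$. These are mutually exclusive: (\ref{gf.Stir}) expands powers of $\ln(1-t)$, not of $\ln(t)$, so invoking it forces a reflection $t\mapsto 1-t$; and once the $\ln^m$ has been absorbed into a multiple harmonic sum there is no $\ln^m(u)$ left to sit inside the kernel. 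The paper resolves this by writing the whole quantity as a $(p+1)$-fold iterated integral over a simplex and applying the reflection $t_j\mapsto 1-t_{p+2-j}$ to \emph{all} variables at once. This moves $\ln^m$ to the innermost slot as $\ln^m(1-t)/t$, where (\ref{gf.Stir}) turns it into $\ze_{n-1}(\{1\}_{m-1})$; the intermediate forms become $\ud t/(1-t)$ and build the composition denominators; and the whole depth structure collapses into the single sum $4\sum_{n}\ze_{n-1}(\{1\}_{p-1},2,\{1\}_{m-1})\,I(2n,0)$. Note the kernel is $I(2n,0)$, not $I(2n,m)$: Lemma \ref{Lem.int.Inm} is only ever invoked with second argument $0$ in this proof, so your explanation that the $\hat 2$ entry ``arises because the sum $\sum_{j=1}^{m+1}\cdots$ in Lemma \ref{Lem.int.Inm} telescopes down to a single boundary index'' does not describe what happens; the $\hat 2$ comes from the weight-$2$ slot already present in $\ze_{n-1}(\{1\}_{p-1},2,\{1\}_{m-1})$ (the $n_1^{-2}$ produced when the (\ref{gf.Stir}) factor $t^{n_1}/n_1$ meets the first $1/(1-t)$ integration).

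Your alternative substitution $tx\mapsto u^2$ in the double integral does not recover this; carried out honestly it produces $\int_0^x t^{n-1}\ln^m(t)\uud t$-type inner integrals and binomially expanded $(\ln u-\ln x)^m$ terms, i.e.\ it steers you toward the machinery of Theorem \ref{Th.S*mp}, whose output is a sum over compositions $|k|_{p+1}\leq w-1$ rather than the single clean term $4\ze(\bar{1},\{\hat{1}\}_p,\hat{2},\{\hat{1}\}_{m-1})$; collapsing that composition sum back down is a genuine additional identity you would still have to prove. So while the splitting, the use of (\ref{rec.MHS}), (\ref{zen.ze2n}), Lemma \ref{Lem.int.Inm}, and the final sign-telescoping are all the right ingredients, the decisive step --- the simplex reflection that converts the two log-power layers into the single MHS $\ze_{n-1}(\{1\}_{p-1},2,\{1\}_{m-1})$ paired with $I(2n,0)$ --- is missing from your argument, and what you put in its place would not deliver the stated closed form without substantial further work.
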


\begin{proof}
When $p\geq 1$, using the generating function (\ref{gf.cb1}) and the integral
\begin{equation}\label{int.tn.lnm.2}
\int_0^1\frac{t^n\ln^m(t)}{1-t}\uud t
    =\sum_{i=0}^{\infty}\int_0^1t^{n+i}\ln^m(t)\uud t
    =(-1)^mm!\{\ze(m+1)-H_n^{(m+1)}\}\,,
\end{equation}
and then applying the change of variables $t_j\mapsto 1-t_{p+2-j}$, for $j=1,2,\ldots,p+1$, we have
\begin{align*}
\sum_{n=1}^\infty\frac{(\ze(m+1)-H_n^{(m+1)})\binom{2n}{n}}{4^n n^{p+1}}
&=\frac{(-1)^m}{m!}\cdot2
    \int_0^1\frac{\ln^{m}(t)}{1-t}\uud t
    \underbrace{\frac{\ud t}{t}\cdots\frac{\ud t}{t}}_{p-1}
    \frac{\ln\smbb{\frac{2}{1+\sqrt{1-t}}}}{t}\uud t\\
&=\frac{(-1)^m}{m!}\cdot2
    \int_0^1\frac{\ln\smbb{\frac{2}{1+\sqrt{t}}}}{1-t}\uud t
    \underbrace{\frac{\ud t}{1-t}\cdots\frac{\ud t}{1-t}}_{p-1}
    \frac{\ln^{m}(1-t)}{t}\uud t\,.
\end{align*}
If $p\geq 2$, substituting Eq. (\ref{gf.Stir}) and the power series expansion of $1/(1-t)$ gives
\begin{align}
&\sum_{n=1}^\infty\frac{(\ze(m+1)-H_n^{(m+1)})\binom{2n}{n}}{4^n n^{p+1}}\nonumber\\
&\quad=2\sum_{n_1,n_2,\ldots,n_p\geq 1}
    \frac{\ze_{n_1-1}(\{1\}_{m-1})}{n_1^2(n_1+n_2)\cdots(n_1+n_2+\cdots+n_p)}
    \int_0^1t^{n_1+n_2+\cdots+n_p}\frac{\ln\smbb{\frac{2}{1+\sqrt{t}}}}{1-t}\uud t\nonumber\\
&\quad=2\sum_{n=1}^\infty\frac{1}{n}\ze_{n-1}(\{1\}_{p-2},2,\{1\}_{m-1})
    \int_0^1t^n\frac{\ln\smbb{\frac{2}{1+\sqrt{t}}}}{1-t}\uud t\nonumber\\
&\quad=4\sum_{n=1}^\infty\ze_{n-1}(\{1\}_{p-1},2,\{1\}_{m-1})
    \int_0^1t^{2n-1}\ln\smbb{\frac{2}{1+t}}\uud t\,.\label{Smp.temp1}
\end{align}
By computation, the last term of (\ref{Smp.temp1}) also holds for $p=1$. Now, according to (\ref{rec.MHS}), we have
\begin{align*}
&\ze_{n-1}(\{1\}_{p-1},2,\{1\}_{m-1})\\
&\quad=\sum_{l=1}^p\frac{(-1)^{l-1}}{n^{l-1}}\ze_n(\{1\}_{p-l},2,\{1\}_{m-1})
    +\sum_{l=1}^m\frac{(-1)^{l+p-1}}{n^{l+p}}\ze_n(\{1\}_{m-l})\,,
\end{align*}
which can be used to split the summation in (\ref{Smp.temp1}) into two parts. By applying Eq.  (\ref{zen.ze2n}) and Lemma \ref{Lem.int.Inm}, and then performing telescoping, we can rewrite the first part as
\begin{align*}
&4\sum_{l=1}^p(-1)^{l-1}\sum_{n=1}^{\infty}\frac{1}{n^{l-1}}
    \ze_n(\{1\}_{p-l},2,\{1\}_{m-1})I(2n,0)\\
&\quad=8\sum_{l=1}^p(-1)^{l-1}\sum_{n=1}^{\infty}\frac{1}{n^{l-1}}
    \sum_{\si_j\in\{\pm1\}}
    \ze_{2n}\smbb{\cat_{i=1}^{p-l}\{\si_i\},2\si_{p-l+1},
        \cat_{i=p-l+2}^{p-l+m}\{\si_i\}}I(2n,0)\\
&\quad=2\sum_{l=1}^p(-2)^l\sum_{\si_j\in\{\pm1\}}
    \sum_{n=1}^{\infty}\frac{\si_0^n}{n^l}
    \ze_n\smbb{\cat_{i=1}^{p-l}\{\si_i\},2\si_{p-l+1},
        \cat_{i=p-l+2}^{p-l+m}\{\si_i\}}(\ze(\bar{1})-\ze_n(\bar{1}))\\
&\quad=2\sum_{l=1}^{p-1}(-2)^l\sum_{\si_j\in\{\pm1\}}
    \bibb{
        \begin{array}{c}
            \ze\smbb{\bar{1},l\si_0,\cat\limits_{i=1}^{p-l}\{\si_i\},2\si_{p-l+1},
                \cat\limits_{i=p-l+2}^{p-l+m}\{\si_i\}}\\
            +\ze\smbb{\bar{1},(l+1)\si_0\si_1,\cat\limits_{i=2}^{p-l}\{\si_i\},2\si_{p-l+1}
                \cat\limits_{i=p-l+2}^{p-l+m}\{\si_i\}}
        \end{array}
    }\\
&\quad\quad+2(-2)^p\sum_{\si_j\in\{\pm1\}}
    \{\ze(\bar{1},p\si_0,2\si_1,\si_2,\ldots,\si_m)
        +\ze(\bar{1},(p+2)\si_0\si_1,\si_2,\ldots,\si_m)\}\\
&\quad=-4\sum_{\si_j\in\{\pm 1\}}
    \ze(\bar{1},\si_0,\si_1,\ldots,\si_{p-1},2\si_p,\si_{p+1},\ldots,\si_{p+m-1})\\
&\quad\quad+4(-2)^p\sum_{\si_j\in\{\pm1\}}
    \ze(\bar{1},(p+2)\si_0,\si_1,\ldots,\si_{m-1})\,,
\end{align*}
where $\cat_{i=j}^k\{a_i\}$ abbreviates the concatenated argument sequence $a_{j},a_{j+1},\ldots,a_{k}$, with the convention $\cat_{i=j}^k\{a_i\}:=\emptyset$ for $k<j$. The second part can also be evaluated:
\begin{align*}
&4\sum_{l=1}^m(-1)^{l+p-1}\sum_{n=1}^{\infty}\frac{1}{n^{l+p}}\ze_n(\{1\}_{m-l})I(2n,0)\\
&\quad=2\sum_{l=1}^m(-2)^{l+p}\sum_{\si_j\in\{\pm1\}}
    \sum_{n=1}^{\infty}\frac{\si_0^n}{n^{l+p+1}}
    \ze_n(\si_1,\si_2,\ldots,\si_{m-l})(\ze(\bar{1})-\ze_n(\bar{1}))\\
&\quad=2\sum_{l=1}^{m-1}(-2)^{l+p}\sum_{\si_j\in\{\pm1\}}
    \bibb{
        \begin{array}{c}
            \ze(\bar{1},(l+p+1)\si_0,\si_1,\si_2,\ldots,\si_{m-l})\\
            +\ze(\bar{1},(l+p+2)\si_0\si_1,\si_2,\ldots,\si_{m-l})
        \end{array}
    }\\
&\quad\quad+2(-2)^{m+p}\{\ze(\bar{1},m+p+1)+\ze(\bar{1},\ol{m+p+1})\}\\
&\quad=2(-2)^{p+1}\sum_{\si_j\in\{\pm1\}}\ze(\bar{1},(p+2)\si_0,\si_1,\si_2,\ldots,\si_{m-1})\,.
\end{align*}
Combining the last two equations yields, for $p\geq 1$, the explicit formula
\begin{align}
\sum_{n=1}^\infty\frac{(\ze(m+1)-H_n^{(m+1)})\binom{2n}{n}}{4^n n^{p+1}}
&=-4\sum_{\substack{\si_j\in\{\pm 1\}\\j=0,1,\ldots,p+m-1}}
    \ze\smbb{\bar{1},\cat_{i=0}^{p-1}\{\si_i\},2\si_p,\cat_{i=p+1}^{p+m-1}\{\si_i\}}\nonumber\\
&=-4\ze(\bar{1},\{\hat{1}\}_p,\hat{2},\{\hat{1}\}_{m-1})\,.\label{Smp.temp2}
\end{align}
When $p=0$, following steps analogous to those above, we have
\begin{align*}
&\sum_{n=1}^\infty\frac{(\ze(m+1)-H_n^{(m+1)})\binom{2n}{n}}{4^n n}
    =\frac{(-1)^m}{m!}\cdot
        2\int_0^1\ln\smbb{\frac{2}{1+\sqrt{1-t}}}\frac{\ln^m(t)}{1-t}\uud t\\
&\quad=2\sum_{l=1}^m(-2)^l\sum_{\si_j\in\{\pm1\}}
    \sum_{n=1}^{\infty}\frac{\si_0^n}{n^{l+1}}
    \ze_n(\si_1,\ldots,\si_{m-l})(\ze(\bar{1})-\ze_n(\bar{1}))\\
&\quad=-4\sum_{\si_j\in\{\pm 1\}}\ze(\bar{1},2\si_0,\si_1,\si_2,\ldots,\si_{m-1})
    =-4\ze(\bar{1},\hat{2},\{\hat{1}\}_{m-1})\,,
\end{align*}
which coincides with the form of (\ref{Smp.temp2}). Thus, by Theorem \ref{Th.Sp}, we obtain the final result.
\end{proof}

\begin{example}
When $m=1$, setting $p=0,1$ in Theorem \ref{Th.Smp} yields
\begin{align*}
\sum_{n=1}^\infty\frac{H_n^{(2)}\binom{2n}{n}}{4^n n}
    &=4\ze(\bar{1},2)+4\ze(\bar{1},\bar{2})-2\ze(2)\ze(\bar{1})=\frac{3}{2}\ze(3)\,,\\
\sum_{n=1}^\infty\frac{H_n^{(2)}\binom{2n}{n}}{4^n n^2}
    &=3\ze(4)-3\ze(3)\ln(2)\,,
\end{align*}
where the value of $\bms_{2,1}$ was shown by Sun \cite[Remark 5.2]{Sun15.ANSR} by using Mathematica.\hfill\qedsymbol
\end{example}

%%%%%%%%%%%%%%%%%%%%%%%%%%%%%%%%%%%%%%%%%%%%%%%%%%%%%%
%%%%%%%%%%%%%%%%%%%%%%%%%%%%%%%%%%%%%%%%%%%%%%%%%%%%%%
%%%%%%%%%%%%%%%%%%%%%%%%%%%%%%%%%%%%%%%%%%%%%%%%%%%%%%
%%%%%%%%%%%%%%%%%%%%%%%%%%%%%%%%%%%%%%%%%%%%%%%%%%%%%%
%%%%%%%%%%%%%%%%%%%%%%%%%%%%%%%%%%%%%%%%%%%%%%%%%%%%%%

\subsection{Linear sums $\bms_{1,p}$ and $\bms_{m,p}^\star$}

The \emph{exponential complete Bell polynomials} $Y_n$ are defined by
\begin{equation}\label{cBell.gf}
\exp\smbb{\sum_{k=1}^{\infty}x_k\frac{t^k}{k!}}
    =\sum_{n=0}^{\infty}Y_n(x_1,x_2,\ldots,x_n)\frac{t^n}{n!}\,,
\end{equation}
and satisfy the recurrence
\begin{equation}\label{cBell.rec}
Y_n(x_1,x_2,\ldots,x_n)=\sum_{j=0}^{n-1}\binom{n-1}{j}x_{n-j}Y_j(x_1,x_2,\ldots,x_j)
    \,,\quad n\geq1\,;
\end{equation}
see \cite[Section 3.3]{Com74} and \cite[Section 2.8]{Riordan58}. From \cite[Eqs. (2.5) and (2.9)]{Xu17.MZVES}, it is known that
\begin{equation}\label{zn*1k}
\int_0^1t^{n-1}\ln^k(1-t)\uud t
    =(-1)^kk!\frac{\ze_n^\star(\{1\}_k)}{n}\,,
\end{equation}
for $n\geq1$ and $k\geq0$, and $\ze_n^\star(\{1\}_k)=\frac{1}{k!}Y_k(H_n,1!H_n^{(2)},2!H_n^{(3)},\ldots,(k-1)!H_n^{(k)})$\,. Thus we can use (\ref{cBell.rec}) to evaluate $\ze_n^\star(\{1\}_k)$ and the integral in (\ref{zn*1k}) rapidly. For example, we have
\begin{align*}
&\ze_n^\star(1)=H_n\,,\quad
    \ze_n^\star(1,1)=\frac{1}{2}(H_n^2+H_n^{(2)})\,,\quad
    \ze_n^\star(1,1,1)=\frac{1}{6}(H_n^3+3H_nH_n^{(2)}+2H_n^{(3)})\,,\\
&\ze_n^\star(1,1,1,1)=\frac{1}{24}(H_n^4+6H_n^2H_n^{(2)}+3(H_n^{(2)})^2+8H_nH_n^{(3)}+6H_n^{(4)})\,.
\end{align*}
The following theorem presents the explicit formulas of a kind of series involving
$\ze_n^\star (\{1\}_m)$.

\begin{theorem}\label{Th.S*mp}
For integers $m,p\geq 0$, the linear sums $\bms_{m,p}^\star$ satisfy
\begin{align}
&\bms_{m,1}^\star:=\sum_{n=1}^\infty\frac{\ze_n^\star(\{1\}_m)\binom{2n}{n}}{4^n n}
    =-2^{m+1}\ze(\ol{m+1})\,,\label{S*m1}\\
&\bms_{m,p+2}^\star:=\sum_{n=1}^\infty\frac{\ze_n^\star(\{1\}_m)\binom{2n}{n}}{4^n n^{p+2}}
    =-2^{m+1}\sum_{\substack{|k|_{p+1}\leq w-1\\k_1,\ldots,k_{p+1}\geq 1}}
        \ze\smbb{\bar{k}_{p+1},\widehat{w-|k|_{p+1}},\cat_{i=1}^p\{\hat{k}_i\}}
        \,,\label{S*mp}
\end{align}
where $|k|_l:=k_1+k_2+\cdots+k_l$, and $w:=m+p+2$ is the weight of the corresponding sums.
\end{theorem}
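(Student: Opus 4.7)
The plan is to follow the iterated-integral strategy used in the proofs of Theorems \ref{Th.Sp} and \ref{Th.Smp}, with the outer integrand $\ln^m(t)/(1-t)$ from Theorem \ref{Th.Smp}---whose moment against $t^n$ produces $\ze(m+1)-H_n^{(m+1)}$ via (\ref{int.tn.lnm.2})---replaced by $\ln^m(1-t)/t$, whose moment against $t^{n-1}$ produces $(-1)^m m!\,\ze_n^\star(\{1\}_m)/n$ by the key identity (\ref{zn*1k}). This is the ``dual'' integrand that captures the harmonic-star factor we want.

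For the main case $p\geq 1$, I would combine (\ref{gf.cb1}) with (\ref{zn*1k}) to obtain the iterated-integral representation
$$\bms_{m,p+2}^\star=\frac{2(-1)^m}{m!}\int_0^1\frac{\ln^m(1-t)}{t}\uud t\underbrace{\frac{\uud t}{t}\cdots\frac{\uud t}{t}}_{p-1}\frac{\ln\smbb{\frac{2}{1+\sqrt{1-t}}}}{t}\uud t,$$
then apply the involutive change of variables $t_j\mapsto 1-t_{p+2-j}$ (exactly as in the proof of Theorem \ref{Th.Smp}) to reverse the iterated integral so that the outermost integrand becomes $\ln(2/(1+\sqrt{t}))/(1-t)$, the innermost becomes $\ln^m(t)/(1-t)$, and the $p-1$ middle factors become $\uud t/(1-t)$. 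I would then expand each $1/(1-t)$ as a geometric series, invoke (\ref{gf.Stir}) to produce MHS factors of the form $\ze_{n-1}(\{1\}_?)$, perform the $t\to t^2$ substitution to convert $\sqrt{t}$ into $t$, and apply Lemma \ref{Lem.int.Inm} to evaluate the resulting integral $\int_0^1 t^{2n-1}\ln(2/(1+t))\uud t=I(2n,0)$. Finally, the identities (\ref{rec.MHS}) and (\ref{zen.ze2n}) together with telescoping---mirroring the proofs of Theorems \ref{Th.Sp} and \ref{Th.Smp}---collapse the nested sums into the claimed alternating-MZV expression. The case $\bms_{m,2}^\star$ ($p=0$) requires no middle layer and follows from the same computation applied to a single integral.

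For the base case $\bms_{m,1}^\star$, I would write
$$\bms_{m,1}^\star=\frac{(-1)^m}{m!}\int_0^1\frac{\ln^m(1-t)}{t}\smbb{\frac{1}{\sqrt{1-t}}-1}\uud t$$
using (\ref{zn*1k}) combined with $\sum_{n\geq1}\binom{2n}{n}4^{-n}t^n=1/\sqrt{1-t}-1$; the substitution $t=1-u^2$ together with $\int_0^1\ln^m(s)/(1-s)\uud s=(-1)^m m!\,\ze(m+1)$ (for $m\geq 1$) yields $\bms_{m,1}^\star=2^{m+1}t(m+1)-\ze(m+1)$, which simplifies to $-2^{m+1}\ze(\overline{m+1})$ via $t(m+1)=(1-2^{-(m+1)})\ze(m+1)$; the case $m=0$ reduces directly to $\bms_1=2\ln 2=-2\ze(\bar1)$ from Theorem \ref{Th.Sp}. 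The main obstacle will be the combinatorial bookkeeping in the final telescoping step: the target MZVs have depth $p+2$, so there are $p+1$ layers of $\si_j\in\{\pm 1\}$ summations to collapse, and the multi-index structure---a bar on the leading argument $k_{p+1}$, a hat on the ``weight-completing'' argument $\widehat{w-|k|_{p+1}}$, and hats on each $\hat k_i$ for $i\leq p$, indexed by $k_i\geq 1$ with $|k|_{p+1}\leq w-1$---must emerge cleanly from pairing the boundary contributions produced by successive applications of (\ref{rec.MHS}). Matching this combinatorial pattern exactly is the delicate step.
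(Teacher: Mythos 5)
Your treatment of the base case $\bms_{m,1}^\star$ is correct and essentially the paper's (the substitution $t=1-u^2$ reduces the integral to $\frac{(-1)^m}{m!}2^{m+1}\int_0^1\frac{\ln^m(u)}{1+u}\uud u=-2^{m+1}\ze(\ol{m+1})$), and your overall architecture for $p\geq1$ --- the representation via (\ref{zn*1k}) and (\ref{gf.cb1}), the reversal $t_j\mapsto1-t_{p+2-j}$, reduction to the integrals of Lemma \ref{Lem.int.Inm}, then (\ref{rec.MHS}), (\ref{zen.ze2n}) and telescoping --- is also the paper's.

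However, there is a genuine gap at the central step. After the reversal, the factor carrying the $m$-th power of the logarithm is the \emph{innermost} one, $\ln^m(t)/(1-t)$, whose logarithm has argument $t$, not $1-t$; the generating function (\ref{gf.Stir}) expands powers of $\ln(1-t)$ and simply does not apply to it. If it did apply, the $m$ log powers would collapse into a single string $\ze_{n-1}(\{1\}_{m-1})$ and the answer would have the fixed shape of Theorem \ref{Th.Smp}; the fact that (\ref{S*mp}) is instead a sum over all compositions $k_1+\cdots+k_{p+1}\leq w-1$ already shows this cannot happen. The tool the paper uses in its place is the incomplete moment
\[
\int_0^xt^{n-1}\ln^m(t)\uud t=\sum_{l=0}^m\frac{(-1)^ll!}{n^{l+1}}\binom{m}{l}\ln^{m-l}(x)x^n\,,
\]
applied successively at the $p$ inner levels: each level absorbs some power $k_i$ of the logarithm and passes $\ln^{m-|k|_p}$ outward, so the outermost integral is $I(2n,m-|k|_p)$ with a generally nonzero second argument --- not $I(2n,0)$ as you assert (the same issue already appears at $p=0$, where one needs $I(2k,m)$). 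Consequently the composition indices $k_1,\ldots,k_{p+1}$ in (\ref{S*mp}) are created at this peeling stage, not in the final telescoping as your last paragraph suggests; the telescoping only removes the $\si_0$-sign sums exactly as in Theorems \ref{Th.Sp} and \ref{Th.Smp}. As written, your pipeline would not produce the claimed multi-index structure.
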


\begin{proof}
By considering the integral (\ref{zn*1k}) and the generating function
\[
\sum_{n=0}^\infty\frac{\binom{2n}{n}}{4^n}t^n=\frac{1}{\sqrt{1-t}}\,,
\]
we establish Eq. (\ref{S*m1}):
\begin{align*}
\sum_{n=1}^\infty\frac{\ze_n^\star(\{1\}_m)\binom{2n}{n}}{4^n n}
    &=(-1)^mm!\int_0^1\smbb{\frac{1}{\sqrt{1-t}}-1}\frac{\ln^m(1-t)}{t}\uud t\\
    &=(-1)^mm!\cdot2^{m+1}\int_0^1\frac{\ln^m(t)}{1+t}\uud t
        =-2^{m+1}\ze(\ol{m+1})\,.
\end{align*}
The proof of Eq. (\ref{S*mp}) is similar to that of (\ref{Smp}), but more complex, so we present the crucial steps. When $p\geq 1$, using (\ref{gf.cb1}) and (\ref{zn*1k}), we obtain the integral representation of the sum, which can be further transformed respectively by the integral
\begin{align*}
\int_0^xt^{n-1}\ln^m(t)\uud t
    =\sum_{l=0}^m\frac{(-1)^ll!}{n^{l+1}}\binom{m}{l}\ln^{m-l}(x)x^n\,,
    \quad\text{for } x\in(0,1)\,,
\end{align*}
Eqs. (\ref{rec.MHS}) and (\ref{zen.ze2n}), and Lemma \ref{Lem.int.Inm} to the following forms:
\begin{align*}
&\sum_{n=1}^\infty\frac{\ze_n^\star(\{1\}_m)\binom{2n}{n}}{4^n n^{p+2}}
=\frac{(-1)^m}{m!}\cdot 2\int_0^1\frac{\ln^m(1-t)}{t}\uud t
    \underbrace{\frac{\ud t}{t}\cdots\frac{\ud t}{t}}_{p-1}
    \frac{\ln\smbb{\frac{2}{1+\sqrt{1-t}}}}{t}\uud t\\
&\quad=(-1)^m\sum_{0\leq |k|_p\leq m}\frac{(-1)^{|k|_p}}{(m-|k|_p)!}2^{m+2-|k|_p}
    \sum_{n=1}^{\infty}\ze_{n-1}(k_1+1,\ldots,k_p+1)I(2n,m-|k|_p)\\
&\quad=(-1)^m\sum_{0\leq |k|_p\leq m}\frac{(-1)^{|k|_p}}{(m-|k|_p)!}
    \sum_{l=1}^{p+1}(-1)^{l-1}2^{m+l}
    \sum_{n=1}^{\infty}\frac{1+(-1)^n}{n^{|k|_{l-1}+l-1}}\\
&\quad\quad\times\sum_{\si_i\in\{\pm1\}}
    \ze_n((k_l+1)\si_l,\ldots,(k_p+1)\si_p)I(n,m-|k|_p)\\
&\quad=\sum_{0\leq |k|_{p+1}\leq m}\sum_{l=1}^{p+1}(-1)^l2^{m+l}
    \sum_{\si_i\in\{\pm1\}}\sum_{n=1}^{\infty}
    \frac{\si_0^n\ze_n(\cat_{i=l}^p\{(k_i+1)\si_i\})}{n^{m+l-k_l-\cdots-k_{p+1}}}
    \sum_{j=n+1}^{\infty}\frac{(-1)^j}{j^{k_{p+1}+1}}\\
&\quad=-2^{m+1}\sum_{0\leq|k|_{p+1}\leq m}\sum_{\si_i\in\{\pm1\}}
    \ze\smbb{\ol{k_{p+1}+1},(m+1-|k|_{p+1})\si_0,\cat_{i=1}^p\{(k_i+1)\si_i\}}\,,
\end{align*}
where $k_i\geq 0$, for $i=1,2,\ldots,p+1$. Thus, by the change of variables and the definition of mixed MZVs, we obtain the formula (\ref{S*mp}). Finally, when $p=0$, we have
\begin{align*}
\sum_{n=1}^\infty\frac{\ze_n^\star(\{1\}_m)\binom{2n}{n}}{4^n n^2}
&=\frac{(-1)^m}{m!}\cdot2\int_0^1\frac{\ln^m(1-t)}{t}\ln\smbb{\frac{2}{1+\sqrt{1-t}}}\uud t\\
&=\frac{(-1)^m}{m!}\cdot2^{m+2}\sum_{k=1}^{\infty}I(2k,m)
    =-2^{m+1}\sum_{j=1}^{m+1}\sum_{k>n\geq1}\frac{1+(-1)^n}{n^{m+2-j}}
        \cdot\frac{(-1)^k}{k^j}\,,
\end{align*}
which indicates that the formula (\ref{S*mp}) also holds for $p=0$. This completes the proof.
\end{proof}

Setting $m=0$ in Theorem \ref{Th.S*mp} gives Theorem \ref{Th.Sp}. Setting $m=1$ in Theorem \ref{Th.S*mp} and considering all the positive integer solutions of the inequality $k_1+\cdots+k_{p+1}\leq p+2$, we establish the explicit formula of the linear sums $\bms_{1,p+1}$, which is the supplement to Theorem \ref{Th.Smp}.

\begin{corollary}\label{Coro.S1p}
For integer $p\geq 0$, the linear sums $\bms_{1,p+1}$ satisfy
\[
\bms_{1,p+1}:=\sum_{n=1}^\infty\frac{H_n\binom{2n}{n}}{4^n n^{p+1}}
    =-4\bibb{\ze(\bar{2},\{\hat{1}\}_p)
    +\sum_{l=0}^{p-1}\ze(\bar{1},\{\hat{1}\}_l,\hat{2},\{\hat{1}\}_{p-1-l})}\,.
\]
\end{corollary}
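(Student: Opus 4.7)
The plan is to obtain Corollary \ref{Coro.S1p} as an almost immediate specialization of Theorem \ref{Th.S*mp}. The key observation is that $\ze_n^\star(\{1\}_1) = H_n$, so in the notation of that theorem one has $\bms_{1,p+1}^\star = \bms_{1,p+1}$, and it remains to unpack the right-hand sides of (\ref{S*m1}) and (\ref{S*mp}) when $m=1$.

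For the boundary case $p=0$, I would apply (\ref{S*m1}) directly to get $\bms_{1,1} = -4\ze(\bar 2)$; this matches the statement since the inner sum over $l$ is then empty. For $p \geq 1$, I would apply (\ref{S*mp}) with $m=1$ and with the theorem's parameter $p$ replaced by $p-1$, so that the outer exponent becomes $p+1$, the weight is $w = p+2$, and the summation runs over tuples $(k_1,\ldots,k_p) \in \mathbb{Z}_{\geq 1}^p$ satisfying $|k|_p \leq p+1$, each contributing
\[
\ze\smbb{\bar k_p,\widehat{p+2-|k|_p},\cat_{i=1}^{p-1}\{\hat k_i\}}.
\]

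The only real work is then a short combinatorial enumeration. Since $k_i \geq 1$, the constraint $p \leq |k|_p \leq p+1$ forces either $|k|_p = p$ (all $k_i = 1$) or $|k|_p = p+1$ (exactly one coordinate equal to $2$). The all-ones tuple contributes $\ze(\bar 1,\hat 2,\{\hat 1\}_{p-1})$. If $k_p = 2$ and the other $k_i = 1$, the contribution collapses to $\ze(\bar 2,\{\hat 1\}_p)$. If instead $k_l = 2$ for some $l$ with $1 \leq l \leq p-1$, the $\hat 2$ lands at position $l+2$ of the argument string, yielding $\ze(\bar 1,\{\hat 1\}_l,\hat 2,\{\hat 1\}_{p-1-l})$. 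Reindexing the all-ones tuple as the $l=0$ case of this family merges it with the remaining contributions into the sum over $l \in \{0,\ldots,p-1\}$, and together with the solitary term $\ze(\bar 2,\{\hat 1\}_p)$ and the overall prefactor $-4$ this reproduces the claimed identity.

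I expect no substantive analytic obstacle; the only mild pitfall is keeping careful track of the position of the $\hat 2$ within the concatenated argument string $\cat_{i=1}^{p-1}\{\hat k_i\}$ and verifying that the $p=0$ and $p=1$ boundary cases agree with the closed form.
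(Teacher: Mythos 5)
Your proposal is correct and follows exactly the route the paper intends: specialize Theorem \ref{Th.S*mp} to $m=1$ (using $\ze_n^\star(\{1\}_1)=H_n$), handle $p=0$ via (\ref{S*m1}), and for $p\geq 1$ enumerate the positive-integer tuples with $|k|_p\in\{p,p+1\}$ in (\ref{S*mp}); your bookkeeping of where the $\hat 2$ lands and the merging of the all-ones tuple into the $l=0$ term are exactly right. The paper gives only a one-sentence indication of this computation, so your write-up simply supplies the details it omits.
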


\begin{example}
Setting $p=0$ in Corollary \ref{Coro.S1p} gives
\[
\sum_{n=1}^\infty\frac{H_n\binom{2n}{n}}{4^n n}=2\ze(2)\,,
\]
in accordance with Alzer et al. \cite[Eq. (3.8)]{AlKarSri06} and Boyadzhiev \cite[Section 3]{Boyad12}. By further setting $p=1,2$ in Corollary \ref{Coro.S1p}, we obtain
\begin{align*}
\sum_{n=1}^\infty\frac{H_n\binom{2n}{n}}{4^n n^2}
    &=\frac{9}{2}\ze(3)-4\ze(2)\ln(2)\,,\\
\sum_{n=1}^\infty\frac{H_n\binom{2n}{n}}{4^n n^3}
    &=8\Lii_4\smbb{\frac{1}{2}}-\frac{13}{4}\ze(4)-2\ze(3)\ln(2)
        +2\ze(2)\ln^2(2)+\frac{1}{3}\ln^4(2)\,.
\end{align*}
\end{example}

When $m=2$, based on the value of $\ze_n^\star(1,1)$, we derive from Theorem \ref{Th.S*mp} the formula of the quadratic sums $\bms_{1^2,p}$.

\begin{corollary}\label{Coro.S11p}
For integer $p\geq 1$, the quadratic sums $\bms_{1^2,p}$ satisfy
\[
\bms_{1^2,p}=2\bms_{2,p}^\star-\bms_{2,p}\,.
\]
\end{corollary}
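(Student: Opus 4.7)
The plan is to observe that the quadratic sum $\bms_{1^2,p}$ is encoded by $H_n^2$, while the star value $\ze_n^\star(1,1)$ is the symmetric combination $\frac12(H_n^2+H_n^{(2)})$ recorded explicitly in the display immediately after the Bell polynomial recurrence (\ref{cBell.rec}). Solving that identity for $H_n^2$ gives the pointwise relation
\[
H_n^2 = 2\ze_n^\star(1,1) - H_n^{(2)}\,,
\]
which is the heart of the corollary.

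From here I would simply multiply by $\binom{2n}{n}/(4^n n^p)$ and sum over $n\geq 1$. Each of the three resulting series is absolutely convergent for $p\geq 1$ (since $\binom{2n}{n}/4^n\sim 1/\sqrt{\pi n}$, the summand is $O(n^{-p-1/2}\log^2 n)$, so termwise rearrangement is legitimate), and by the definitions in (\ref{EAS.def}) and in Theorem \ref{Th.S*mp} the three pieces are exactly $\bms_{1^2,p}$, $2\bms_{2,p}^\star$, and $\bms_{2,p}$ respectively. This yields the claimed identity $\bms_{1^2,p}=2\bms_{2,p}^\star-\bms_{2,p}$.

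There is no real obstacle here: the corollary is a direct algebraic consequence of the stuffle-type symmetric function identity $\ze_n^\star(1,1)=\tfrac12(H_n^2+H_n^{(2)})$, which in turn follows from (\ref{cBell.rec}) with $Y_2(x_1,x_2)=x_1^2+x_2$ applied to $(x_1,x_2)=(H_n,H_n^{(2)})$. The only thing worth flagging is that convergence must be checked in order to split the sum into the three pieces, but this is immediate from the asymptotics of the central binomial coefficient together with $p\geq 1$.
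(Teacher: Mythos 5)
Your argument is correct and is exactly the paper's (implicit) proof: the authors likewise derive the corollary from the identity $\ze_n^\star(1,1)=\tfrac12\bigl(H_n^2+H_n^{(2)}\bigr)$ by rearranging to $H_n^2=2\ze_n^\star(1,1)-H_n^{(2)}$, multiplying by $\binom{2n}{n}/(4^n n^p)$, and summing. Your added remark on absolute convergence (via $\binom{2n}{n}/4^n\sim 1/\sqrt{\pi n}$) is a harmless extra justification the paper omits.
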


\begin{example}
As instances, we have
\begin{align*}
\sum_{n=1}^\infty\frac{H_n^2\binom{2n}{n}}{4^n n}
    &=\frac{21}{2}\ze(3)\,,\\
\sum_{n=1}^\infty\frac{H_n^2\binom{2n}{n}}{4^n n^2}
    &=32\Lii_4\smbb{\frac{1}{2}}-14\ze(4)+7\ze(3)\ln(2)-8\ze(2)\ln^2(2)+\frac{4}{3}\ln^4(2)\,.
\end{align*}
\end{example}

%%%%%%%%%%%%%%%%%%%%%%%%%%%%%%%%%%%%%%%%%%%%%%%%%%%%%%
%%%%%%%%%%%%%%%%%%%%%%%%%%%%%%%%%%%%%%%%%%%%%%%%%%%%%%
%%%%%%%%%%%%%%%%%%%%%%%%%%%%%%%%%%%%%%%%%%%%%%%%%%%%%%
%%%%%%%%%%%%%%%%%%%%%%%%%%%%%%%%%%%%%%%%%%%%%%%%%%%%%%
%%%%%%%%%%%%%%%%%%%%%%%%%%%%%%%%%%%%%%%%%%%%%%%%%%%%%%

\subsection{Quadratic sums $\bms_{1m,p}$ and cubic sums $\bms_{1^3,p}$}

The evaluations of $\bms_{1(m+1),p}$, for $m,p\geq 1$, can be deduced from the following theorem, which, together with Corollary \ref{Coro.S11p}, asserts that all the quadratic sums $\bms_{1m,p}$ can be expressed in terms of alternating MZVs.

\begin{theorem}\label{Th.S1mp}
For integers $m,p\geq1$, the quadratic sums $\bms_{1(m+1),p}$ satisfy
\begin{align*}
\bms_{1(m+1),p}:
&=\sum_{n=1}^\infty\frac{H_nH_n^{(m+1)}\binom{2n}{n}}{4^n n^p}\\
&=4\ze(\bar{1},\tilde{1},\{\hat{1}\}_{p-1},\hat{2},\{\hat{1}\}_{m-1})
    -4\ze(\tilde{2},\{\hat{1}\}_{p-1},\hat{2},\{\hat{1}\}_{m-1})
    +\ze(m+1)\bms_{1,p}\,.
\end{align*}
\end{theorem}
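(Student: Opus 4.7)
The plan is to follow the iterated-integral template of Theorem \ref{Th.Smp}, but with one extra layer of integration to account for the additional $H_n$ factor. First I would split off the known part via
\[
\bms_{1(m+1),p} = \ze(m+1)\bms_{1,p}
    - \sum_{n=1}^\infty\frac{H_n\bibb{\ze(m+1)-H_n^{(m+1)}}\binom{2n}{n}}{4^n n^p}\,,
\]
so the first summand is handled by Corollary \ref{Coro.S1p} and only the residual tail $T_{m,p}$ on the right remains to be computed.

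Next I would recast $T_{m,p}$ as an iterated integral by combining three representations simultaneously: for the tail $\ze(m+1)-H_n^{(m+1)}$ use (\ref{int.tn.lnm.2}); for the factor $H_n/n$ use the $k=1$ case of (\ref{zn*1k}), namely $H_n/n = -\int_0^1 u^{n-1}\ln(1-u)\uud u$; and for $\binom{2n}{n}/(4^n n)$ use the generating function (\ref{gf.cb1}). The residual factor $1/n^{p-1}$ is absorbed by $p-1$ nested integrations against $\uud t/t$. Performing the substitution $t_j\mapsto 1-t_{p+2-j}$, as in the passage just before (\ref{Smp.temp1}), brings the integrand into the standard shape used in Theorem \ref{Th.Smp}, but now carrying an additional $\ln(1-t)$ factor inherited from the $H_n$ representation. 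Expanding every logarithm via (\ref{gf.Stir}) introduces MHSs, and $T_{m,p}$ reduces to a finite linear combination of sums of the type $\sum_n \text{MHS}(n)\int_0^1 t^{2n-1}\ln\smbb{\frac{2}{1+t}}\uud t$, which are evaluated by Lemma \ref{Lem.int.Inm} after unfolding the MHS through (\ref{rec.MHS}) and doubling the index range through (\ref{zen.ze2n}). Telescoping over the now-enlarged sign sequence $(\si_0,\si_1,\ldots)$, the single $1+(-1)^n$ factor contributed by the extra $H_n$ integration is expected to survive at the second slot of the sign string, producing the $\tilde{1}$ entry in $\ze(\bar{1},\tilde{1},\{\hat{1}\}_{p-1},\hat{2},\{\hat{1}\}_{m-1})$, while a boundary telescoping that collapses an adjacent $\bar{1},\tilde{1}$ pair yields the second alternating MZV $\ze(\tilde{2},\{\hat{1}\}_{p-1},\hat{2},\{\hat{1}\}_{m-1})$.

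The main obstacle I expect is the bookkeeping at the telescoping stage. Compared with the proof of Theorem \ref{Th.Smp}, the sum over sign vectors gains one more variable, and one must match terms carefully so that exactly one ``tilde'' slot persists at the correct position while the remaining slots become ordinary ``hat'' entries or cancel against the $\ze(m+1)\bms_{1,p}$ piece produced by the initial split. Verifying that all unpaired boundary contributions cancel cleanly, in direct analogy with the step from (\ref{Smp.temp1}) to (\ref{Smp.temp2}), is where the bulk of the combinatorial work will lie; the integral computations and the identities \ref{int.tn.lnm.2}, \ref{zn*1k}, \ref{gf.cb1}, \ref{gf.Stir}, \ref{rec.MHS}, \ref{zen.ze2n} and Lemma \ref{Lem.int.Inm} do all the analytic heavy lifting once the right iterated integral is set up.
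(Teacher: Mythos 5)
Your opening decomposition (splitting off $\ze(m+1)\bms_{1,p}$ and attacking the tail $\sum_n 4^{-n}n^{-p}H_n(\ze(m+1)-H_n^{(m+1)})\binom{2n}{n}$) is exactly the paper's, and the downstream toolkit you invoke --- (\ref{gf.Stir}), (\ref{rec.MHS}), (\ref{zen.ze2n}), Lemma \ref{Lem.int.Inm}, telescoping --- is the right one. But the central step, namely how the extra factor $H_n$ enters the iterated integral, is where your plan has a genuine gap. You propose to represent $H_n/n$ by the separate integral $-\int_0^1u^{n-1}\ln(1-u)\uud u$ and to ``combine three representations simultaneously'' with (\ref{int.tn.lnm.2}) and (\ref{gf.cb1}). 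These are all \emph{full-interval} representations in independent variables; multiplying them and summing over $n$ produces a multiple integral over a product domain with a coupling $(ut)^n$, not the simplex-type nested chain that the change of variables $t_j\mapsto 1-t_{p+2-j}$ and the expansion (\ref{gf.Stir}) are designed to process. Two such full-interval kernels (one for $H_n$, one for the tail $\ze(m+1)-H_n^{(m+1)}$) cannot both be absorbed into a single nested chain, because the incomplete versions $\int_0^x$ of either integral no longer evaluate to harmonic numbers. So the claimed reduction to sums of the form $\sum_n\mathrm{MHS}(n)\int_0^1t^{2n-1}\ln\smbb{\frac{2}{1+t}}\uud t$ does not follow from the setup as described. (There is also an off-by-one in the power-of-$n$ accounting: with $H_n/n$ and $\binom{2n}{n}/(4^nn)$ each consuming one power of $n$, only $p-2$ further $\ud t/t$ layers are available, not $p-1$.)

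The paper's resolution is to absorb $H_n$ into the innermost kernel from the start, using the closed-form generating function (\ref{gf.cb2}) for $\sum_nH_n\binom{2n}{n}t^n/4^n$, so that only one nested chain is needed; the price is a new base integral $J(n,m)=\int_0^1t^{n-1}\ln^m(t)\ln\smbb{\frac{1+t}{2t}}\uud t$, evaluated in (\ref{int.Jnm}), which appears at the \emph{odd} argument $2n+1$ because of the $1/\sqrt{1-t}$ prefactor in (\ref{gf.cb2}). That odd-index structure is what generates the factor $1-(-1)^n$, hence the $\tilde{1}$ slot, and the extra term $\frac{(-1)^m(m+1)!}{n^{m+2}}$ in $J$ is what produces $\ze(\tilde{2},\{\hat{1}\}_{p-1},\hat{2},\{\hat{1}\}_{m-1})$. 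Your proposed route through (\ref{gf.cb1}) yields only even arguments $I(2n,\cdot)$ and hence ``hat'' factors $1+(-1)^n$; indeed your own description attributes the $\tilde{1}$ entry to a surviving $1+(-1)^n$, which by the paper's conventions would be a $\hat{1}$, not a $\tilde{1}$. This mismatch is a symptom of the structural problem: without (\ref{gf.cb2}) (or an equivalent device placing the $H_n$ weight inside the innermost generating function), the stated formula's tilde entries cannot arise.
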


\begin{proof}
From \cite[Theorem 1]{ChenH16}, we obtain
\begin{equation}\label{gf.cb2}
\sum_{n=1}^\infty\frac{H_n\binom{2n}{n}}{4^n}t^n
    =\frac{2}{\sqrt{1-t}}\ln\smbb{\frac{1+\sqrt{1-t}}{2\sqrt{1-t}}}\,,
\end{equation}
and by Lemma \ref{Lem.int.Inm},  we have
\begin{align}
J(n,m):&=\int_0^1t^{n-1}\ln^m(t)\ln\smbb{\frac{1+t}{2t}}\uud t\nonumber\\
&=(-1)^mm!\sum_{j=1}^{m+1}\frac{(-1)^n}{n^{m+2-j}}(\ze(\bar{j})-\ze_n(\bar{j}))
    +\frac{(-1)^m(m+1)!}{n^{m+2}}\,.\label{int.Jnm}
\end{align}
Then by means of the transformation (\ref{zen.ze2n}), it can be found that
\begin{align*}
&\sum_{n=1}^\infty\frac{H_n(\ze(m+1)-H_n^{(m+1)})\binom{2n}{n}}{4^nn^p}
    =\frac{(-1)^m}{m!}\cdot2\int_0^1\frac{\ln^m(t)}{1-t}\uud t
    \underbrace{\frac{\ud t}{t}\cdots\frac{\ud t}{t}}_{p-1}
    \frac{\ln\smbb{\frac{1+\sqrt{1-t}}{2\sqrt{1-t}}}}{t\sqrt{1-t}}\uud t\\
&\quad=4\sum_{n=1}^{\infty}\ze_n(\{1\}_{p-1},2,\{1\}_{m-1})
    \cdot J(2n+1,0)\\
&\quad=-4\sum_{n=1}^{\infty}\sum_{\si_j\in\{\pm1\}}
    \ze_{n-1}\smbb{\cat_{i=1}^{p-1}\{\si_i\},2\si_p,\cat_{i=p+1}^{p+m-1}\{\si_i\}}
    \frac{1-(-1)^n}{n}\bibb{\sum_{k=n+1}^{\infty}\frac{(-1)^k}{k}-\frac{1}{n}}\\
&\quad=-4\ze(\bar{1},\tilde{1},\{\hat{1}\}_{p-1},\hat{2},\{\hat{1}\}_{m-1})
    +4\ze(\tilde{2},\{\hat{1}\}_{p-1},\hat{2},\{\hat{1}\}_{m-1})\,,
\end{align*}
for $m,p\geq 1$, which gives the desired result.
\end{proof}

Additionally, based on the value of $\ze_n^\star(1,1,1)$, we can compute the cubic sums $\bms_{1^3,p}$ by Theorems \ref{Th.Smp}, \ref{Th.S*mp} and \ref{Th.S1mp}:

\begin{corollary}\label{Coro.S111p}
For integer $p\geq 1$, the cubic sums $\bms_{1^3,p}$ satisfy
\[
\bms_{1^3,p}=6\bms_{3,p}^\star-3\bms_{12,p}-2\bms_{3,p}\,.
\]
\end{corollary}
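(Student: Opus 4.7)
The plan is to exploit the explicit expansion of $\ze_n^\star(\{1\}_3)$ in terms of products of generalized harmonic numbers that was recorded just before Theorem~\ref{Th.S*mp}, namely
\[
\ze_n^\star(1,1,1)=\tfrac{1}{6}\bigl(H_n^3+3H_nH_n^{(2)}+2H_n^{(3)}\bigr),
\]
which follows from the complete Bell polynomial recurrence (\ref{cBell.rec}). This identity is equivalent to
\[
H_n^3 = 6\ze_n^\star(1,1,1)-3H_nH_n^{(2)}-2H_n^{(3)}.
\]

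First I would multiply both sides of this identity by $\binom{2n}{n}/(4^n n^p)$ and sum over $n\geq 1$. Each of the three series on the right-hand side converges absolutely for every $p\geq 1$, since $\binom{2n}{n}/4^n\sim 1/\sqrt{\pi n}$ while $H_n^{(r)}=O(\log n)$ (for $r=1$) or $O(1)$ (for $r\geq 2$) and similarly $\ze_n^\star(\{1\}_3)=O(\log^3 n)$, so termwise splitting of the sum is legitimate. The result is immediately
\[
\bms_{1^3,p} = 6\bms_{3,p}^\star - 3\bms_{12,p} - 2\bms_{3,p},
\]
which is the claimed identity.

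The explicit evaluation in terms of alternating multiple zeta values then follows by invoking, in sequence, Theorem~\ref{Th.S*mp} for $\bms_{3,p}^\star$ (taking $m=3$), Theorem~\ref{Th.S1mp} for $\bms_{12,p}$ (taking $m=1$), and Theorem~\ref{Th.Smp} for $\bms_{3,p}$ (taking $m=2$). There is no substantive obstacle in this argument; the whole content is the algebraic inversion of the Bell-polynomial expression for $\ze_n^\star(\{1\}_3)$ combined with the previously established closed forms for the three simpler sums. The only point requiring minimal care is the absolute convergence justification noted above, which permits the termwise reorganization.
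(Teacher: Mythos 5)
Your proposal is correct and matches the paper's own (very brief) argument: the paper likewise derives the identity by inverting the Bell-polynomial expression $\ze_n^\star(1,1,1)=\frac{1}{6}(H_n^3+3H_nH_n^{(2)}+2H_n^{(3)})$ and combining Theorems \ref{Th.Smp}, \ref{Th.S*mp} and \ref{Th.S1mp}. Your added remark on absolute convergence justifying the termwise splitting is a harmless (and welcome) extra detail the paper leaves implicit.
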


\begin{example}
By further setting $p=1$ in Theorem \ref{Th.S1mp} and Corollary \ref{Coro.S111p} gives
\begin{align*}
&\sum_{n=1}^\infty\frac{H_nH_n^{(2)}\binom{2n}{n}}{4^n n}
    =-8\Lii_4\smbb{\frac{1}{2}}+\frac{49}{4}\ze(4)-7\ze(3)\ln(2)+2\ze(2)\ln^2(2)
    -\frac{1}{3}\ln^4(2)\,,\\
&\sum_{n=1}^\infty\frac{H_n^3\binom{2n}{n}}{4^n n}
    =40\Lii_4\smbb{\frac{1}{2}}+\frac{115}{4}\ze(4)+35\ze(3)\ln(2)
    -10\ze(2)\ln^2(2)+\frac{5}{3}\ln^4(2)\,.
\end{align*}
\end{example}

\begin{example}
Let $(x)_n$ be the \emph{rising factorial} (or the \emph{Pochhammer symbol}) defined by $(x)_0=1$ and $(x)_n=x(x+1)\cdots(x+n-1)$ for $n\geq 1$. Then $(\frac{1}{2})_n=4^{-n}n!\binom{2n}{n}$, and
\begin{align*}
&\sum_{n=1}^{\infty}\frac{(\frac{1}{2})_n}{n\cdot n!}H_{n-1}=\bms_{1,1}-\bms_2
    =\ze(2)+2\ln^2(2)\,,\\
&\sum_{n=1}^{\infty}\frac{(\frac{1}{2})_n}{n\cdot n!}\{H_{n-1}^2-H_{n-1}^{(2)}\}
    =4\ze(3)+4\ze(2)\ln(2)+\frac{8}{3}\ln^3(2)\,,\\
&\sum_{n=1}^{\infty}\frac{(\frac{1}{2})_n}{n\cdot n!}\{H_{n-1}^3-H_{n-1}^{(3)}\}\\
&\quad=-24\Lii_4\smbb{\frac{1}{2}}+\frac{207}{4}\ze(4)+15\ze(3)\ln(2)
    +18\ze(2)\ln^2(2)-\ln^4(2)\,,
\end{align*}
where the first two series can be found in the work of Srivastava and Choi \cite[p. 354]{SriChoi12}.
\hfill\qedsymbol
\end{example}

%%%%%%%%%%%%%%%%%%%%%%%%%%%%%%%%%%%%%%%%%%%%%%%%%%%%%%
%%%%%%%%%%%%%%%%%%%%%%%%%%%%%%%%%%%%%%%%%%%%%%%%%%%%%%
%%%%%%%%%%%%%%%%%%%%%%%%%%%%%%%%%%%%%%%%%%%%%%%%%%%%%%
%%%%%%%%%%%%%%%%%%%%%%%%%%%%%%%%%%%%%%%%%%%%%%%%%%%%%%
%%%%%%%%%%%%%%%%%%%%%%%%%%%%%%%%%%%%%%%%%%%%%%%%%%%%%%

\subsection{Quadratic sums $\bms^\star_{1m,p}$}\label{Sec.S*1mp}

Similarly, the explicit formula of $\bms_{1m,p}^\star$ can be established.

\begin{theorem}\label{Th.S*1mp}
For integers $m\geq 0$ and $p\geq 1$, the quadratic sums $\bms_{1m,p}^\star$ satisfy
\begin{align*}
&\bms_{1m,1}^\star:=\sum_{n=1}^\infty\frac{H_n\ze_n^\star(\{1\}_m)\binom{2n}{n}}{4^n n}
    =-2^{m+1}\sum_{k=1}^{m+1}\ze(\bar{k},\widetilde{m+2-k})+2^{m+1}(m+1)\ze(\widetilde{m+2})\,,\\
&\begin{aligned}
\bms_{1m,p+1}^\star:=\sum_{n=1}^\infty\frac{H_n\ze_n^\star(\{1\}_m)\binom{2n}{n}}{4^n n^{p+1}}
&=-2^{m+1}\sum_{\substack{|k|_{p+1}\leq w-1\\k_1,\ldots,k_{p+1}\geq1}}
    \ze\smbb{\bar{k}_{p+1},\widetilde{w-|k|_{p+1}},\cat_{i=1}^p\{\hat{k}_i\}}\nonumber\\
&\quad+2^{m+1}\sum_{\substack{|k|_p\leq w-2\\k_1,\ldots,k_p\geq1}}
    (w-1-|k|_p)\ze\smbb{\widetilde{w-|k|_p},\cat_{i=1}^p\{\hat{k}_i\}}\,,
\end{aligned}
\end{align*}
where $|k|_l:=k_1+k_2+\cdots+k_l$, and $w:=m+p+2$ is the weight of the corresponding sums.
\end{theorem}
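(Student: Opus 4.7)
The plan is to adapt the proof of Theorem \ref{Th.S*mp} by two substitutions: replace the generating function $(1-t)^{-1/2}$ used there with (\ref{gf.cb2}) for $H_n\binom{2n}{n}/4^n$, and replace the auxiliary integral $I(n,\cdot)$ of Lemma \ref{Lem.int.Inm} by $J(n,\cdot)$ of (\ref{int.Jnm}). Concretely, for $p\ge 1$, I would first use (\ref{zn*1k}) to rewrite $\ze_n^\star(\{1\}_m)/n$ as an integral in $\ln^m(1-t)$, insert the generating function (\ref{gf.cb2}), and apply a $(p-1)$-fold iteration of $\int_0^\cdot dt/t$ to obtain the iterated integral representation
\[
\bms_{1m,p+1}^\star=\frac{(-1)^m\cdot 2}{m!}\int_0^1\frac{\ln^m(1-t)}{t}\,dt\underbrace{\frac{dt}{t}\cdots\frac{dt}{t}}_{p-1}\frac{\ln\!\left(\frac{1+\sqrt{1-t}}{2\sqrt{1-t}}\right)}{t\sqrt{1-t}}\,dt.
\]
I would then evaluate this iterated integral by the same cascade used in the proof of Theorem \ref{Th.S*mp}: the antiderivative formula $\int_0^x t^{n-1}\ln^m(t)\,dt=\sum_{l=0}^m\binom{m}{l}\frac{(-1)^l l!}{n^{l+1}}\ln^{m-l}(x)x^n$, the recursion (\ref{rec.MHS}), the duplication (\ref{zen.ze2n}), and a final application of $J(n,\cdot)$ (in place of $I(n,\cdot)$) to convert everything into alternating and mixed MZVs.

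The decisive observation is the splitting $J(n,m)=-I(n,m)+(-1)^m(m+1)!/n^{m+2}$, immediate upon comparing (\ref{int.Jnm}) with Lemma \ref{Lem.int.Inm}. The $-I$ piece, after exactly the telescoping carried out in the proof of Theorem \ref{Th.S*mp}, produces the first sum of the statement; the only modification is that the slot ``$\widehat{w-|k|_{p+1}}$'' gets replaced by ``$\widetilde{w-|k|_{p+1}}$'', reflecting the extra factor $1/\sqrt{1-t}$ in the present kernel. Concretely, after the substitution $u=\sqrt{1-t}$ this extra factor changes the relevant series expansion from odd powers $u^{2k+1}$ (as in Theorem \ref{Th.S*mp}) to even powers $u^{2k}$, so the relevant integral becomes $J(2k+1,\cdot)$ with odd argument, and the corresponding application of (\ref{zen.ze2n}) selects $1-(-1)^n$ instead of $1+(-1)^n$. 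The rational remainder $(-1)^m(m+1)!/n^{m+2}$ in $J$ collapses the $\bar{k}_{p+1}$-slot and, after summing the remaining multi-indices via (\ref{zen.ze2n}), produces the second sum, with the prefactor $w-1-|k|_p$ arising from counting the positions into which the collapsed slot can be inserted.

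For $p=0$, I would compute directly: combining (\ref{gf.cb2}) and (\ref{zn*1k}) and substituting $u=\sqrt{1-t}$ yields $\bms_{1m,1}^\star=\frac{(-1)^m 2^{m+2}}{m!}\sum_{k=0}^\infty J(2k+1,m)$, the analogue of the $\sum_{k\ge 1} I(2k,m)$ expression used for the $p=0$ case of Theorem \ref{Th.S*mp}; expanding via (\ref{int.Jnm}) and interchanging summation orders then produces the first formula. The main obstacle, as in Theorem \ref{Th.S*mp}, will be the combinatorial bookkeeping --- tracking all multi-indices, signs, and binomial coefficients through the cascade, and in particular verifying that the contributions from the extra rational term of $J$ combine across positions to yield precisely the coefficient $w-1-|k|_p$ advertised in the second sum.
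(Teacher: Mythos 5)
Your proposal follows essentially the same route as the paper: the same iterated integral representation built from (\ref{zn*1k}) and (\ref{gf.cb2}), the same cascade via the antiderivative formula, (\ref{rec.MHS}) and (\ref{zen.ze2n}), and the reduction to $J(2n-1,\cdot)$ at odd arguments, whose expansion (\ref{int.Jnm}) (equivalently your splitting $J=-I+(-1)^m(m+1)!/n^{m+2}$) yields the tilde slots and the two sums of the statement. The only quibble is that the coefficient $w-1-|k|_p=m+1-|k|_p$ comes directly from the ratio $(m'+1)!/m'!$ in the rational remainder of $J(\cdot,m')$ with $m'=m-|k|_p$, not from any positional counting, but this does not affect the validity of the argument.
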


\begin{proof}
When $p\geq 1$, using the integral (\ref{zn*1k}) and generating function (\ref{gf.cb2}), and then applying the transformation (\ref{zen.ze2n}) and the integral (\ref{int.Jnm}), we have
\begin{align*}
&\sum_{n=1}^\infty\frac{H_n\ze_n^\star(\{1\}_m)\binom{2n}{n}}{4^n n^{p+1}}
    =\frac{(-1)^m}{m!}\cdot 2\int_0^1\frac{\ln^m(1-t)}{t}\uud t
    \underbrace{\frac{\ud t}{t}\cdots\frac{\ud t}{t}}_{p-1}
    \frac{\ln\smbb{\frac{1+\sqrt{1-t}}{2\sqrt{1-t}}}}{t\sqrt{1-t}}\uud t\\
&\quad=(-1)^m\sum_{0\leq|k|_p\leq m}\frac{(-1)^{|k|_p}}{(m-|k|_p)!}
    2^{m+2-|k|_p}\sum_{n=1}^{\infty}\ze_{n-1}\smbb{\cat_{i=1}^p\{k_i+1\}}J(2n-1,m-|k|_p)\\
&\quad=-2^{m+1}\sum_{0\leq|k|_p\leq m}
    \sum_{n=1}^{\infty}\sum_{\si_i\in\{\pm1\}}\ze_{n-1}\smbb{\cat_{i=1}^p\{(k_i+1)\si_i\}}\\
&\quad\quad\times\bibb{\sum_{j=1}^{m+1-|k|_p}\frac{1-(-1)^n}{n^{m+2-|k|_p-j}}
    (\ze(\bar{j})-\ze_n(\bar{j}))-(m+1-|k|_p)\frac{1-(-1)^n}{n^{m+2-|k|_p}}}\,,
\end{align*}
where $k_i\geq 0$, for $i=1,2,\ldots,p$. Thus the formula of $\bms_{1m,p+1}^\star$ can be established. Next, since
\[
\sum_{n=1}^\infty\frac{H_n\ze_n^\star(\{1\}_m)\binom{2n}{n}}{4^n n}
    =\frac{(-1)^m}{m!}\cdot 2\int_0^1\frac{\ln^m(1-t)}{t}
    \frac{\ln\smbb{\frac{1+\sqrt{1-t}}{2\sqrt{1-t}}}}{\sqrt{1-t}}\uud t\,;
\]
then by computation, the formula of $\bms_{1m,1}^\star$ can also be obtained.
\end{proof}

Setting $m=0,1,2$ in Theorem \ref{Th.S*1mp} gives new expressions of the linear sums $\bms_{1,p+1}$, the quadratic sums $\bms_{1^2,p}$ and the cubic sums $\bms_{1^3,p}$, which are simpler than those presented before.

\begin{corollary}\label{Coro.S1p.11p.111p}
For integer $p\geq 0$, the linear sums $\bms_{1,p+1}$ satisfy
\[
\bms_{1,p+1}:=\sum_{n=1}^\infty\frac{H_n\binom{2n}{n}}{4^n n^{p+1}}
    =-2\ze(\bar{1},\tilde{1},\{\hat{1}\}_p)+2\ze(\tilde{2},\{\hat{1}\}_p)\,.
\]
For integer $p\geq 1$, the quadratic sums $\bms_{1^2,p}$ and the cubic sums $\bms_{1^3,p}$ satisfy
\[
\bms_{1^2,p}=\bms_{1^2,p}^\star\,,\quad
    \bms_{1^3,p}=2\bms_{12,p}^\star-\bms_{12,p}\,.
\]
\end{corollary}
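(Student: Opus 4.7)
The plan is to derive all three identities by specializing Theorem \ref{Th.S*1mp} at $m = 0, 1, 2$ and translating back via the closed forms of $\ze_n^\star(\{1\}_m)$ supplied by the complete Bell polynomial recurrence (\ref{cBell.rec}): namely $\ze_n^\star(\{1\}_0) = 1$, $\ze_n^\star(\{1\}_1) = H_n$, and $\ze_n^\star(\{1\}_2) = \tfrac{1}{2}(H_n^2 + H_n^{(2)})$. Under these specializations the sums $\bms_{1m,p}^\star$ become explicit rational linear combinations of $\bms_{1,p}$, $\bms_{1^2,p}$, $\bms_{1^3,p}$, and $\bms_{12,p}$, and Theorem \ref{Th.S*1mp} then produces the desired alternating-MZV evaluations.

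For the first identity, I take $m=0$ in Theorem \ref{Th.S*1mp}, so that $\bms_{10,p+1}^\star = \bms_{1,p+1}$. When $p=0$, the first formula of the theorem reduces to the single-term sum $-2\,\ze(\bar 1, \tilde 1) + 2\,\ze(\tilde 2)$, matching the claim. When $p \geq 1$, the second formula has weight $w = p+2$, and the constraints $|k|_{p+1} \leq p+1$, $k_i \geq 1$, force $k_1 = \cdots = k_{p+1} = 1$, while $|k|_p \leq p$, $k_i \geq 1$, force $k_1 = \cdots = k_p = 1$ with coefficient $w-1-|k|_p = 1$. Each of the two sums therefore collapses to a single term, yielding $-2\,\ze(\bar 1, \tilde 1, \{\hat 1\}_p) + 2\,\ze(\tilde 2, \{\hat 1\}_p)$, as asserted. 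For the second identity, taking $m=1$ gives $\bms_{11,p}^\star = \sum_{n\geq1} H_n \cdot H_n \binom{2n}{n}/(4^n n^p) = \bms_{1^2,p}$ directly from the definition of $\bms_{1m,p}^\star$; thus $\bms_{1^2,p} = \bms_{1^2,p}^\star$, with the explicit alternating-MZV expansion now coming from Theorem \ref{Th.S*1mp} at $m=1$. For the third, taking $m=2$ gives
\[
\bms_{12,p}^\star = \sum_{n=1}^\infty \frac{H_n \cdot \tfrac{1}{2}(H_n^2 + H_n^{(2)}) \binom{2n}{n}}{4^n n^p} = \tfrac{1}{2}\bms_{1^3,p} + \tfrac{1}{2}\bms_{12,p},
\]
and solving for $\bms_{1^3,p}$ yields $\bms_{1^3,p} = 2\,\bms_{12,p}^\star - \bms_{12,p}$.

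There is no substantive obstacle; the only step demanding care is the combinatorial collapse in the $m=0$ case, where one must check that the index constraints leave only the all-ones tuples as admissible. The resulting expressions are strictly simpler than those obtained earlier (via Corollary \ref{Coro.S1p} for $\bms_{1,p+1}$, via Corollary \ref{Coro.S11p} combined with Theorem \ref{Th.Smp} for $\bms_{1^2,p}$, and via Corollary \ref{Coro.S111p} for $\bms_{1^3,p}$), which explains why the authors record them as a separate corollary rather than folding them into the preceding derivations.
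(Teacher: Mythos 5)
Your proposal is correct and follows exactly the route the paper intends: the corollary is stated as the specialization of Theorem \ref{Th.S*1mp} at $m=0,1,2$, combined with the closed forms $\ze_n^\star(\emptyset)=1$, $\ze_n^\star(1)=H_n$, $\ze_n^\star(1,1)=\tfrac12(H_n^2+H_n^{(2)})$. Your verification that the constraints $k_i\geq1$, $|k|_{p+1}\leq w-1$ (resp.\ $|k|_p\leq w-2$) collapse the sums to the all-ones tuples in the $m=0$ case is the only detail the paper leaves implicit, and you have carried it out correctly.
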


%%%%%%%%%%%%%%%%%%%%%%%%%%%%%%%%%%%%%%%%%%%%%%%%%%%%%%
%%%%%%%%%%%%%%%%%%%%%%%%%%%%%%%%%%%%%%%%%%%%%%%%%%%%%%
%%%%%%%%%%%%%%%%%%%%%%%%%%%%%%%%%%%%%%%%%%%%%%%%%%%%%%
%%%%%%%%%%%%%%%%%%%%%%%%%%%%%%%%%%%%%%%%%%%%%%%%%%%%%%
%%%%%%%%%%%%%%%%%%%%%%%%%%%%%%%%%%%%%%%%%%%%%%%%%%%%%%

\section{Evaluations via contour integration}\label{Sec.Con.Int}

%%%%%%%%%%%%%%%%%%%%%%%%%%%%%%%%%%%%%%%%%%%%%%%%%%%%%%
%%%%%%%%%%%%%%%%%%%%%%%%%%%%%%%%%%%%%%%%%%%%%%%%%%%%%%
%%%%%%%%%%%%%%%%%%%%%%%%%%%%%%%%%%%%%%%%%%%%%%%%%%%%%%
%%%%%%%%%%%%%%%%%%%%%%%%%%%%%%%%%%%%%%%%%%%%%%%%%%%%%%
%%%%%%%%%%%%%%%%%%%%%%%%%%%%%%%%%%%%%%%%%%%%%%%%%%%%%%

\subsection{Lemmas}\label{Sec.Lem}

The contour integration is an efficient method to evaluate infinite series by reducing them to a finite number of residue computations. Flajolet and Salvy \cite{FlSa98} used this method to compute the classical Euler sums, and found many interesting results. Now, let us consider the Euler-Ap\'{e}ry-type series.

\begin{lemma}[\citu{FlSa98}{Lemma 2.1}]\label{Lem.Res}
Let $\xi(z)$ be a kernel function and let $r(z)$ be a rational function which is $O(z^{-2})$ at infinity. Then
\begin{equation}\label{Cau.Lind}
\sum_{\al\in O}{\rm Res}(r(z)\xi(z),\al)
    +\sum_{\be\in S}{\rm Res}(r(z)\xi(z),\be)=0\,,
\end{equation}
where $S$ is the set of poles of $r(z)$ and $O$ is the set of poles of $\xi(z)$ that are not poles of $r(z)$. Here ${\rm Res}(h(z),\la)$ denotes the residue of $h(z)$ at $z=\la$, and the kernel function $\xi(z)$ is meromorphic in the whole complex plane and satisfies $\xi(z)=o(z)$ over an infinite collection of circles $|z|=\rho_k$ with $\rho_k\to+\infty$.
\end{lemma}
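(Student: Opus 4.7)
The plan is to apply Cauchy's residue theorem on a sequence of expanding circles and then pass to the limit. Since $r(z)\xi(z)$ is meromorphic on $\mathbb{C}$, its pole set is the discrete union $S\cup O$. First I would choose the radii $\rho_k$ so that no pole lies exactly on $|z|=\rho_k$, which is possible by discreteness (and, if needed, by a small perturbation of the $\rho_k$ provided by the hypothesis $\rho_k\to+\infty$). For $k$ large enough, every pole of $r(z)$ lies strictly inside $|z|=\rho_k$, so the residue theorem gives
\[
\frac{1}{2\pi\ui}\oint_{|z|=\rho_k}r(z)\xi(z)\uud z
 =\sum_{\substack{\al\in O\\|\al|<\rho_k}}{\rm Res}(r(z)\xi(z),\al)
 +\sum_{\be\in S}{\rm Res}(r(z)\xi(z),\be)\,.
\]

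The crux will be to show that the left-hand side tends to $0$ as $k\to\infty$. Here I would invoke the standard $ML$-inequality: the contour has length $2\pi\rho_k$, the hypothesis $r(z)=O(z^{-2})$ at infinity gives $|r(z)|=O(\rho_k^{-2})$ uniformly on $|z|=\rho_k$, and the kernel assumption gives $|\xi(z)|=o(\rho_k)$ on the same family of circles. Multiplying these bounds yields
\[
\left|\oint_{|z|=\rho_k}r(z)\xi(z)\uud z\right|
 \leq 2\pi\rho_k\cdot O(\rho_k^{-2})\cdot o(\rho_k)=o(1)\,,
\]
so the contour integral vanishes in the limit. Since the sum over $\be\in S$ is a fixed finite quantity (a rational function has only finitely many poles), the symmetric partial sums over $O$ along $\rho_k$ converge to $-\sum_{\be\in S}{\rm Res}(r(z)\xi(z),\be)$, which is precisely \eqref{Cau.Lind}.

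The main obstacle is essentially notational rather than conceptual: this is the classical application of the residue theorem that underlies the Flajolet--Salvy method for Euler sums, and the only subtlety is ensuring that the expanding contours avoid all poles, which follows from discreteness of $S\cup O$. Since the statement is quoted directly from \cite[Lemma 2.1]{FlSa98}, the proof essentially amounts to recording this well-known template, and the real work of the section will come in the subsequent lemmas where specific kernels $\xi(z)$ (built from $\pi\cot(\pi z)$, $\pi/\sin(\pi z)$, digamma functions, etc.) are introduced and their growth on suitable circles $|z|=\rho_k$ is verified.
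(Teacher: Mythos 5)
Your proof is correct and is exactly the classical argument: the paper itself gives no proof of this lemma, citing it directly from Flajolet and Salvy, whose own derivation is the same expanding-circles residue computation with the $ML$-bound $2\pi\rho_k\cdot O(\rho_k^{-2})\cdot o(\rho_k)=o(1)$ that you record. The only cosmetic remark is that the circles $|z|=\rho_k$ are supplied by the hypothesis on $\xi$ (and automatically avoid the poles of $\xi$, since $\xi$ is bounded on them), so no perturbation is needed; for large $k$ the finitely many poles of $r$ lie strictly inside, and the argument closes as you state.
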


It is clear that the formula (\ref{Cau.Lind}) is also true if $r(z)\xi(z)=o(z^{-\al})$ $(\al>1)$ over an infinite collection of circles $|z|=\rho_k$ with $\rho_k\to+\infty$.

Flajolet and Salvy \cite{FlSa98} gave the expansions of some basic kernel functions. Here, we need to cite the five formulas:
\begin{align*}
&\pi\cot(\pi z)\stackrel{z\to n}{=}
    \frac{1}{z-n}-2\sum_{k=1}^\infty\ze(2k)(z-n)^{2k-1}\,,\\
&\psi(-z)+\ga\stackrel{z\to n}{=}
    \frac{1}{z-n}+H_n+\sum_{k=1}^\infty\{(-1)^kH_n^{(k+1)}-\ze(k+1)\}(z-n)^k\,,
    \quad\text{if }  n\geq 0\,,\\
&\psi(-z)+\ga\stackrel{z\to -n}{=}
    H_{n-1}+\sum_{k=1}^\infty\{H_{n-1}^{(k+1)}-\ze(k+1)\}(z+n)^k\,,
    \quad\text{if }  n>0\,,\\
&\begin{aligned}
\frac{\psi^{(p-1)}(-z)}{(p-1)!}
&\stackrel{z\to n}{=}
    \frac{1}{(z-n)^p}\smbb{1+(-1)^p\sum_{i\geq p}\binom{i-1}{p-1}
    \{\ze(i)+(-1)^iH_n^{(i)}\}(z-n)^i}\,,\\
&\quad\quad\text{if } n\geq 0\,,\ p>1\,,\\
\end{aligned}\\
&\frac{\psi^{(p-1)}(-z)}{(p-1)!}\stackrel{z\to -n}{=}
    (-1)^p\sum_{i\geq 0}\binom{p-1+i}{p-1}\{\ze(p+i)-H_{n-1}^{(p+i)}\}(z+n)^i\,,
    \quad\text{if } n>0,\ p>1\,.
\end{align*}

\begin{lemma}\label{Lem.CD}
For $|z|<1$, the following identities hold:
\[
\Gamma(z+1)\uue^{\ga z}=\sum_{n=0}^\infty C_n\frac{z^n}{n!}\,,\quad
\{\Gamma(z+1)\uue^{\ga z}\}^{-1}=\sum_{n=0}^\infty D_n\frac{z^n}{n!}\,,
\]
where $\ga:=\lim_{n\to\infty}(H_n-\ln n)$ is the Euler-Mascheroni constant, and
\begin{align*}
&C_n:=Y_n(0,1!\ze(2),-2!\ze(3),\ldots,(-1)^{n}(n-1)!\ze(n))\,,\\
&D_n:=Y_n(0,-1!\ze(2),2!\ze(3),\ldots,(-1)^{n-1}(n-1)!\ze(n))\,.
\end{align*}
\end{lemma}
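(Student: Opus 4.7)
The plan is to reduce both identities to a single Taylor expansion, namely the classical series for $\log\Gamma(z+1)$ around $z=0$, and then feed it into the exponential complete Bell polynomial generating function (\ref{cBell.gf}) that has already been recorded.

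First, I would start from the Weierstrass product
\[
\frac{1}{\Gamma(z+1)} = \uue^{\ga z}\prod_{n=1}^\infty\smbb{1+\frac{z}{n}}\uue^{-z/n}\,,
\]
which gives $\Gamma(z+1)\uue^{\ga z}=\prod_{n=1}^\infty\uue^{z/n}/(1+z/n)$. Taking the principal branch of the logarithm (valid for $|z|<1$, where all factors are close to $1$) and expanding $\log(1+z/n)$ in its Taylor series, the linear term in $z$ telescopes out and one obtains
\[
\log\bibb{\Gamma(z+1)\uue^{\ga z}}
    =\sum_{n=1}^\infty\bibb{\frac{z}{n}-\log\smbb{1+\frac{z}{n}}}
    =\sum_{k=2}^\infty\frac{(-1)^k\ze(k)}{k}z^k\,.
\]
The interchange of summations is justified by absolute convergence since $\sum_{n\geq 1}n^{-k}$ converges for $k\geq 2$. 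Rewriting this as $\sum_{k=1}^\infty x_k z^k/k!$ identifies the coefficients $x_1=0$ and $x_k=(-1)^k(k-1)!\ze(k)$ for $k\geq 2$.

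Next, I would exponentiate and invoke (\ref{cBell.gf}):
\[
\Gamma(z+1)\uue^{\ga z}=\exp\smbb{\sum_{k=1}^\infty x_k\frac{z^k}{k!}}
    =\sum_{n=0}^\infty Y_n(x_1,x_2,\ldots,x_n)\frac{z^n}{n!}\,,
\]
which yields the asserted formula for $C_n$. For the inverse, simply note that
\[
\bibb{\Gamma(z+1)\uue^{\ga z}}^{-1}=\exp\smbb{\sum_{k=1}^\infty(-x_k)\frac{z^k}{k!}}
    =\sum_{n=0}^\infty Y_n(-x_1,-x_2,\ldots,-x_n)\frac{z^n}{n!}\,,
\]
and the sign change in the arguments of $Y_n$ is exactly what transforms $C_n$ into $D_n$.

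No step here is a genuine obstacle: the only subtle point is justifying the termwise passage from the infinite product to the logarithmic series and the rearrangement $\sum_n\sum_k\to\sum_k\sum_n$, which is handled by noting that on any disk $|z|\leq r<1$ the series $\sum_n(z/n-\log(1+z/n))$ is dominated by $\sum_n r^2/(2n^2(1-r))$, giving absolute and uniform convergence. Once the log expansion is in hand, everything reduces to the definition (\ref{cBell.gf}) of the exponential complete Bell polynomials.
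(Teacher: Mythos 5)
Your proof is correct and follows essentially the same route as the paper's: both rest on the Taylor expansion $\log\{\Gamma(z+1)\uue^{\ga z}\}=\sum_{k\geq 2}\frac{(-1)^k\ze(k)}{k}z^k$ combined with the generating function (\ref{cBell.gf}) of the complete Bell polynomials. The only difference is that the paper simply cites this expansion of $1/\Gamma(z)$ as known, whereas you derive it from the Weierstrass product with the convergence details spelled out; the derivation and the resulting arguments $x_k=(-1)^k(k-1)!\ze(k)$ (and $-x_k$ for the inverse) are all correct.
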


\begin{proof}
The lemma follows from the known expansion
\[
\frac{1}{\Gamma(z)}=z\exp\bibb{\ga z-\sum_{k=2}^{\infty}\frac{(-1)^k\ze(k)}{k}z^k}
\]
and the generating function (\ref{cBell.gf}) of the Bell polynomials.
\end{proof}

According to Lemma \ref{Lem.CD}, it is clear that $C_k$ and $D_k$ are rational linear combinations of products of zeta values. In particular, by (\ref{cBell.rec}), we have
$(C_k)_{k\in\mathbb{N}_0}=(1,0,\ze(2),-2\ze(3),\frac{27}{2}\ze(4),\ldots)$ and $(D_k)_{k\in\mathbb{N}_0}=(1,0,-\ze(2),2\ze(3),\frac{3}{2}\ze(4),\ldots)$.
Furthermore, by the relations
\begin{align*}
&\frac{\Gamma(z-n)}{\Gamma(z+1)}
    =\prod_{k=0}^n\frac{1}{z-k}
    =\frac{(-1)^n}{n!}\frac{1}{z}\sum_{m=0}^\infty\ze_n^\star(\{1\}_m)z^m\,,\\
&\frac{\Gamma(z+1)}{\Gamma(z-n)}
    =\prod_{k=0}^n(z-k)
    =(-1)^nn!z\sum_{m=0}^\infty(-1)^m\ze_n(\{1\}_m)z^m\,,
\end{align*}
for $n\geq 0$, the next lemma can be established.

\begin{lemma}\label{Lem.AB}
For nonnegative integer $n$, when $z\to-n$, we have
\begin{align*}
&\Gamma(z)\uue^{\ga(z-1)}
    =\frac{(-1)^n}{n!}\uue^{-\ga(n+1)}\sum_{k=0}^\infty A_k(n)(z+n)^{k-1}\,,\\
&\frac{1}{\Gamma(z)\uue^{\ga(z-1)}}
    =(-1)^nn!\uue^{\ga(n+1)}\sum_{k=0}^\infty B_k(n)(z+n)^{k+1}\,,
\end{align*}
where
\[
A_k(n):=\sum_{\substack{k_1+k_2=k\\k_1,k_2\geq 0}}
    \ze_n^\star(\{1\}_{k_1})\frac{C_{k_2}}{k_2!}\,,\quad
B_k(n):=\sum_{\substack{k_1+k_2=k\\k_1,k_2\geq 0}}
    (-1)^{k_1}\ze_n(\{1\}_{k_1})\frac{D_{k_2}}{k_2!}\,.
\]
\end{lemma}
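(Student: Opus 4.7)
The plan is to set $w = z+n$ and work out each expansion by factoring the singular behavior at $z=-n$ cleanly, using the two rational identities displayed just above the lemma statement together with Lemma~\ref{Lem.CD}. Both identities in Lemma~\ref{Lem.AB} will come from the same bookkeeping: one algebraic identity isolates the $1/(z+n)$ (or the zero $(z+n)$) and produces the harmonic-sum series, while Lemma~\ref{Lem.CD} expands the regular factor $\Gamma(w+1)\ue^{\ga w}$ (or its reciprocal) into the coefficients $C_k$ and $D_k$.

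For the first expansion, I would start from the identity
\[
\frac{\Gamma(z-n)}{\Gamma(z+1)}
    =\frac{(-1)^n}{n!}\,\frac{1}{z}\sum_{m=0}^\infty\ze_n^\star(\{1\}_m)z^m
\]
and substitute $z\mapsto z+n$ to obtain
\[
\Gamma(z)=\frac{(-1)^n}{n!}\,\frac{\Gamma(z+n+1)}{z+n}\sum_{m=0}^\infty\ze_n^\star(\{1\}_m)(z+n)^m\,.
\]
Then I would write $\Gamma(z+n+1)\ue^{\ga(z-1)}=\ue^{-\ga(n+1)}\Gamma((z+n)+1)\ue^{\ga(z+n)}$ and invoke Lemma~\ref{Lem.CD} with the local variable $w=z+n$ to expand this as $\ue^{-\ga(n+1)}\sum_k\frac{C_k}{k!}(z+n)^k$. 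Multiplying the two power series by the Cauchy product and recognizing the coefficient as $A_k(n)$ gives the first assertion.

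For the second expansion, I would apply the same idea to the reciprocal identity
\[
\frac{\Gamma(z+1)}{\Gamma(z-n)}=(-1)^n n!\,z\sum_{m=0}^\infty(-1)^m\ze_n(\{1\}_m)z^m\,,
\]
again after the substitution $z\mapsto z+n$. This yields
\[
\frac{1}{\Gamma(z)}=(-1)^n n!\,\frac{z+n}{\Gamma(z+n+1)}\sum_{m=0}^\infty(-1)^m\ze_n(\{1\}_m)(z+n)^m\,,
\]
and dividing by $\ue^{\ga(z-1)}$ produces $\ue^{\ga(n+1)}$ times the reciprocal of $\Gamma(w+1)\ue^{\ga w}$, which Lemma~\ref{Lem.CD} expands as $\sum_k\frac{D_k}{k!}(z+n)^k$. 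A Cauchy product identifies the combined coefficient as $B_k(n)$, and the factor $(z+n)$ gives the stated $(z+n)^{k+1}$.

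There is no real obstacle here: both identities needed to peel off the singular/vanishing factor are already established immediately before the lemma, and Lemma~\ref{Lem.CD} already furnishes the Taylor coefficients of $\Gamma(w+1)\ue^{\ga w}$ and of its reciprocal. The only care required is to track the shift $z=w-n$ consistently and to rearrange the product of Maclaurin series in $w=z+n$; the exponential prefactors $\ue^{\mp\ga(n+1)}$ then come out automatically from separating $\ue^{\ga(z-1)}=\ue^{-\ga(n+1)}\ue^{\ga(z+n)}$.
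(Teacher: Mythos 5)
Your proposal is correct and follows exactly the route the paper intends: the two partial-fraction/product identities displayed immediately before the lemma, shifted to the local variable $w=z+n$, combined with the expansions of $\Gamma(w+1)\uue^{\ga w}$ and its reciprocal from Lemma~\ref{Lem.CD} via a Cauchy product. The paper gives no further detail than this, so there is nothing to add.
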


%%%%%%%%%%%%%%%%%%%%%%%%%%%%%%%%%%%%%%%%%%%%%%%%%%%%%%
%%%%%%%%%%%%%%%%%%%%%%%%%%%%%%%%%%%%%%%%%%%%%%%%%%%%%%
%%%%%%%%%%%%%%%%%%%%%%%%%%%%%%%%%%%%%%%%%%%%%%%%%%%%%%
%%%%%%%%%%%%%%%%%%%%%%%%%%%%%%%%%%%%%%%%%%%%%%%%%%%%%%
%%%%%%%%%%%%%%%%%%%%%%%%%%%%%%%%%%%%%%%%%%%%%%%%%%%%%%

\subsection{Single sums $\bms_{q}$ and $\tilde{\bms}_{q}$}\label{Sec.Sq.tSq}

Next, we use the contour integral and residue theorem to evaluate some Euler-Ap\'{e}ry-type series. Firstly, denote
\begin{equation}
G(\vec{k}):=G(k_1,k_2,k_3,k_4)
    =\frac{2^{k_3+k_4}\ln^{k_4}(2)C_{k_1}C_{k_2}D_{k_3}}{k_1!k_2!k_3!k_4!}\,.
\end{equation}
Then we have a new closed form expression for the single sums $\bms_{q}$.

\begin{theorem}\label{Th.Sq}
For integer $q\geq 2$, the single sums $\bms_{q-1}$ are reducible to $\ln(2)$ and zeta values. In particular, we have
\[
\bms_{q-1}:=\sum_{n=1}^\infty\frac{\binom{2n}{n}}{4^n n^{q-1}}
    =(-1)^q\sum_{|k|_4=q-1}G(k_1,k_2,k_3,k_4)\,,
\]
where $k_1,\ldots,k_4\geq 0$.
\end{theorem}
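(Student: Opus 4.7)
The plan is to apply the Flajolet--Salvy residue machinery (Lemma 3.1) with the kernel
\[
F(z) := \pi\cot(\pi z)\,\phi(z)\,\frac{1}{z^{q-1}}, \qquad \phi(z) := \frac{\Gamma(z+\tfrac12)}{\sqrt{\pi}\,\Gamma(z+1)},
\]
chosen so that $\phi(n)=\binom{2n}{n}/4^{n}$ for integers $n\geq 0$. The hope is that the residues at the positive integers reconstruct $\bms_{q-1}$, while the residue at $z=0$ encodes the right--hand side.

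First I would classify the poles. At $z=n\geq 1$, $\pi\cot(\pi z)$ contributes a simple pole of residue $1$, $\phi(z)$ is regular, and the residue of $F$ is exactly $\binom{2n}{n}/(4^n n^{q-1})$. At $z=-n$ with $n\geq 1$, the pole of $\cot(\pi z)$ is cancelled by the simple zero of $1/\Gamma(z+1)$, so $F$ is regular. At $z=-n-\tfrac12$ with $n\geq 0$, the pole of $\Gamma(z+\tfrac12)$ is cancelled by the simple zero of $\cot(\pi z)$ at the half--integer, so again $F$ is regular. Only $z=0$ remains, where $F$ has a pole of order $q$. Next I would verify the growth condition: by Stirling $\phi(z)=O(|z|^{-1/2})$, and $\pi\cot(\pi z)$ is bounded on circles $|z|=\rho_k$ that avoid the integer lattice, so $F(z)=O(|z|^{-q+1/2})=o(z^{-1})$ for $q\geq 2$. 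Lemma \ref{Lem.Res} therefore yields
\[
\bms_{q-1} + \mathrm{Res}_{z=0}F(z)=0.
\]

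The decisive step is the residue at $z=0$. The slickest way is to exploit the identity
\[
\phi(z)\phi(-z)=\frac{\tan(\pi z)}{\pi z},
\]
which follows from the reflection formulas $\Gamma(\tfrac12+z)\Gamma(\tfrac12-z)=\pi/\cos(\pi z)$ and $\Gamma(1+z)\Gamma(1-z)=\pi z/\sin(\pi z)$. This collapses the kernel to
\[
F(z)=\frac{\pi\cot(\pi z)\,\phi(z)}{z^{q-1}}=\frac{1}{z^{q}\,\phi(-z)},
\]
so that $\mathrm{Res}_{z=0}F=[z^{q-1}]\bigl(1/\phi(-z)\bigr)=(-1)^{q-1}[z^{q-1}]\bigl(1/\phi(z)\bigr)$. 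Consequently $\bms_{q-1}=(-1)^{q}[z^{q-1}]\bigl(1/\phi(z)\bigr)$.

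Finally, invoking the Legendre duplication formula in the form
\[
\frac{1}{\phi(z)}=\frac{\sqrt{\pi}\,\Gamma(z+1)}{\Gamma(z+\tfrac12)}=\frac{\Gamma(z+1)^{2}\,4^{z}}{\Gamma(2z+1)}=\bigl[\Gamma(z+1)\ue^{\ga z}\bigr]^{2}\cdot\bigl[\Gamma(2z+1)\ue^{2\ga z}\bigr]^{-1}\cdot 4^{z},
\]
and expanding the three factors by Lemma \ref{Lem.CD} (the middle one with $z\mapsto 2z$, producing the $2^{k_{3}}$) together with $4^{z}=\sum_{k_{4}\geq 0}(2\ln 2)^{k_{4}}z^{k_{4}}/k_{4}!$, the coefficient of $z^{q-1}$ is exactly $\sum_{|k|_{4}=q-1}G(k_{1},k_{2},k_{3},k_{4})$. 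Combining with the sign gives the theorem, and the appearance of only $C$'s, $D$'s, $\ln(2)$ and powers of $2$ makes the reducibility of $\bms_{q-1}$ to $\ln(2)$ and Riemann zeta values manifest.

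The main obstacle will be the residue at $z=0$: one must justify the reflection identity and handle the pole of order $q$ carefully. If the identity route is declined, an alternative is to expand $\pi\cot(\pi z)=1/z-2\sum_{k\geq 1}\zeta(2k)z^{2k-1}$ and multiply it into the Taylor expansion of $\phi(z)$ termwise, then reconcile the resulting sum with $[z^{q-1}](1/\phi(z))$ via the recursion for the exponential Bell polynomials; this is more computational but completely elementary given Lemmas \ref{Lem.CD} and the expansions for $\pi\cot(\pi z)$ recorded just before Lemma \ref{Lem.CD}.
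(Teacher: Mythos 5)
Your proof is correct, and it reaches the stated formula by a recognizably different route --- although the two kernels turn out to be the same function in disguise. The paper works directly with $F_0(z)=\frac{\Gamma^2(z+1)}{\Gamma(2z+1)}\frac{4^z}{z^q}$, extracts the binomial sum from the residues at the \emph{negative} integers (which is what Lemma \ref{Lem.AB} and the coefficients $A_k(n)$, $B_k(n)$ are for), and reads off the $G$-sum from the order-$q$ pole at the origin. You instead take the classical Flajolet--Salvy kernel $\pi\cot(\pi z)\,\phi(z)\,z^{-(q-1)}$ with $\phi(z)=\Gamma(z+\tfrac12)/(\sqrt{\pi}\,\Gamma(z+1))$, so the binomial sum appears immediately at the \emph{positive} integers with no expansion lemma needed, and then use the reflection identity $\phi(z)\phi(-z)=\tan(\pi z)/(\pi z)$ to collapse the kernel to $1/(z^q\phi(-z))$; by Legendre duplication this equals $(-1)^qF_0(-z)$, so your residue at the origin is the paper's up to the sign $(-1)^{q-1}$ produced by $z\mapsto-z$, and the final bookkeeping via Lemma \ref{Lem.CD} (with the $2^{k_3}$ from $z\mapsto 2z$ and the $2^{k_4}\ln^{k_4}(2)$ from $4^z$) is identical and yields exactly $\sum_{|k|_4=q-1}G(\vec{k})$. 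What your version buys is economy for this particular theorem: Lemma \ref{Lem.AB} is bypassed entirely and the residue at $0$ reduces to the single Taylor coefficient $[z^{q-1}]\bigl(1/\phi(z)\bigr)$. What the paper's version buys is continuity with the sequel: $F_0$ is precisely the factor that gets multiplied by $\psi(-z)+\ga$, $(\psi(-z)+\ga)^2$ and $\psi^{(1)}(-z)$ in Theorems \ref{Th.tSq} and \ref{Th.tS1q.tT1q}, so its pole structure at the negative integers has to be worked out anyway. One small caveat, which applies equally to the paper's own growth estimate: the bound $\phi(z)=O(|z|^{-1/2})$ is not uniform near the negative real axis, where $\Gamma(z+\tfrac12)$ has poles at the negative half-integers; the $o(z^{-\al})$, $\al>1$, condition of Lemma \ref{Lem.Res} should therefore be verified on circles $|z|=\rho_k$ threaded between these points, where the zeros of $\cot(\pi z)$ at the half-integers compensate the growth of $\phi$ --- your collapsed form $F(z)=1/(z^q\phi(-z))$ actually makes this the cleanest way to see it.
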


\begin{proof}
Consider the function
\[
F_0(z):=\frac{\Gamma^2(z+1)}{\Gamma(2z+1)}\frac{4^z}{z^q}\,.
\]
With the help of the Legendre duplication formula
\[
\Gamma(z)\Gamma\smbb{z+\frac{1}{2}}=\sqrt{\pi}\cdot 2^{1-2z}\Gamma(2z)
\]
and the asymptotic expansion for the ratio of two gamma functions
\[
\frac{\Gamma(z+a)}{\Gamma(z+b)}
    =z^{a-b}\smbb{1+O\smbb{\frac{1}{z}}}\,,
    \quad\text{for } |\arg(z)|\leq\pi-\vep\,,\ \vep>0\,,\ |z|\to\infty
\]
(see \cite[Sections 2.3 and 2.11]{Luke69.1}), we have
\[
F_0(z)=\frac{\sqrt{\pi}}{z^{q-1/2}}\smbb{1+O\smbb{\frac{1}{z}}}\,,\quad |z|\to\infty\,,
\]
which implies that the integral $\oint_{(\infty)}F_0(z)\uud z=0$, where $\oint_{(\infty)}$ denotes integration along large circles, that is, the limit of integrals $\oint_{|s|=\rho_k}$. On the other hand, the function $F_0(z)$ has poles only at all non-positive integers. For a negative integer $-n$, by Lemma \ref{Lem.AB}, if $z\to -n$, we have
\[
F_0(z)=-\frac{4^z}{z^q}\frac{(2n-1)!}{(n-1)!^2}
    \sum_{k_1,k_2,k_3\geq0}2^{k_3+1}A_{k_1}(n-1)A_{k_2}(n-1)B_{k_3}(2n-1)
    (z+n)^{k_1+k_2+k_3-1}\,.
\]
Hence, the residue is
\[
{\rm Res}(F_0(z),-n)=(-1)^{q+1}\frac{\binom{2n}{n}}{4^nn^{q-1}}\,,
    \quad\text{for }n=1,2,\ldots\,.
\]
Similarly, if $z\to 0$, we have
\[
F_0(z)=\frac{4^z}{z^q}\sum_{k_1,k_2,k_3\geq0}
    \frac{2^{k_3}C_{k_1}C_{k_2}D_{k_3}}{k_1!k_2!k_3!}z^{k_1+k_2+k_3}\,,
\]
and the residue of the pole of order $q$ at 0 is
\[
{\rm Res}(F_0(z),0)=\sum_{|k|_4=q-1}G(k_1,k_2,k_3,k_4)\,.
\]
Summing these two contributions gives the statement of the theorem.
\end{proof}

Next, we show the expression of the single sums $\tilde{\bms}_{q}$.

\begin{theorem}\label{Th.tSq}
For integer $q\geq 2$, the single sums $\tilde{\bms}_{q}$ satisfy
\[
\tilde{\bms}_{q}:=\sum_{n=1}^\infty\frac{4^n}{n^q\binom{2n}{n}}
    =(-1)^q\bms_{1,q-1}+\sum_{|k|_5=q-2}G(k_1,k_2,k_3,k_4)\ze(k_5+2)\,,
\]
where $k_1,\ldots,k_5\geq 0$.
\end{theorem}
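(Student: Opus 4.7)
The plan is to extend the contour integral scheme of Theorem~\ref{Th.Sq} by multiplying its function $F_0(z)=\Gamma^2(z+1)4^z/[z^q\Gamma(2z+1)]$ by the kernel $\psi(-z)+\ga$, thus working with
\[
F_1(z):=F_0(z)\,(\psi(-z)+\ga).
\]
This kernel is the natural choice because, as recalled in Section~\ref{Sec.Lem}, $\psi(-z)+\ga$ has a simple pole with residue $1$ at every $z=n\geq 0$ and is analytic at $z=-n$ (for $n\geq 1$) with value $H_{n-1}$. Stirling gives $F_0(z)=O(|z|^{-q+1/2})$ on the circles $|z|=N+\tfrac12$, while $\psi(-z)+\ga$ grows only logarithmically on them, so $F_1=o(|z|^{-1})$ and Lemma~\ref{Lem.Res} yields $\sum_\al {\rm Res}(F_1,\al)=0$.

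I would then compute the three families of residues. At $z=n\geq 1$ the factor $\Gamma^2(n+1)/\Gamma(2n+1)$ equals $1/\binom{2n}{n}$ and $\psi(-z)+\ga$ supplies a simple pole of residue $1$, giving ${\rm Res}(F_1,n)=4^n/(n^q\binom{2n}{n})$, whose sum is $\tilde{\bms}_q$. At $z=-n$ the proof of Theorem~\ref{Th.Sq} already shows that $F_0$ has a simple pole with residue $(-1)^{q+1}\binom{2n}{n}/(4^nn^{q-1})$; since $\psi(-z)+\ga$ is analytic there with value $H_{n-1}$, the product has residue $(-1)^{q+1}H_{n-1}\binom{2n}{n}/(4^nn^{q-1})$, and summing with $H_{n-1}=H_n-1/n$ produces $(-1)^{q+1}(\bms_{1,q-1}-\bms_q)$. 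For the residue at $z=0$ I would combine the Taylor series $F_0(z)=\sum_{j\geq 0}\mathcal{G}_j z^{j-q}$ with $\mathcal{G}_j:=\sum_{|k|_4=j}G(k_1,k_2,k_3,k_4)$, derived in the proof of Theorem~\ref{Th.Sq}, with $\psi(-z)+\ga=1/z-\sum_{k\geq 1}\ze(k+1)z^k$, and reindex by $k_5=k-1$ to obtain
\[
{\rm Res}(F_1,0)=\mathcal{G}_q-\sum_{|k|_5=q-2}G(k_1,k_2,k_3,k_4)\,\ze(k_5+2).
\]

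Assembling the three contributions through Lemma~\ref{Lem.Res} and invoking Theorem~\ref{Th.Sq} (with parameter $q+1$) to substitute $\mathcal{G}_q=(-1)^{q+1}\bms_q$ causes the two appearances of $\bms_q$ to cancel, leaving the asserted identity. The step I expect to be most delicate is the uniform estimate for $F_1$ on the chosen circles, since one must skirt the integer poles of $\psi(-z)$ and combine its $O(\log|z|)$ growth with the Stirling asymptotics of $F_0$; once that estimate is in place, the rest of the argument is routine bookkeeping modeled directly on the proof of Theorem~\ref{Th.Sq}.
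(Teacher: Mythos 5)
Your proposal is correct and follows essentially the same route as the paper: the authors likewise take $F_1(z)=(\psi(-z)+\gamma)F_0(z)$, compute the same three families of residues (with the identical values at $z=n$, $z=-n$, and the order-$(q+1)$ pole at $0$), and combine them via Lemma \ref{Lem.Res} and Theorem \ref{Th.Sq}. Your write-up in fact supplies more of the bookkeeping (the $H_{n-1}=H_n-1/n$ split and the cancellation of $\bms_q$) than the paper's very terse proof does.
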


\begin{proof}
Similarly to Theorem \ref{Th.Sq}, consider the function
$F_1(z):=(\psi(-z)+\ga)F_0(z)$, which has poles only at all integers. At a positive integer $n$, the pole is simple and the residue is
\[
{\rm Res}(F_1(z),n)=\frac{4^n}{n^q\binom{2n}{n}}\,.
\]
At a negative integer $-n$, the pole is simple and the residue is
\[
{\rm Res}(F_1(z),-n)=(-1)^{q+1}\frac{H_{n-1}\binom{2n}{n}}{4^nn^{q-1}}\,.
\]
Finally, the residue of the pole of order $q+1$ at $0$ is found to be
\[
{\rm Res}(F_1(z),0)=\sum_{|k|_4=q}G(k_1,k_2,k_3,k_4)
    -\sum_{|k|_5=q-2}G(k_1,k_2,k_3,k_4)\ze(k_5+2)\,.
\]
By Lemma \ref{Lem.Res} and Theorem \ref{Th.Sq}, summing these three contributions yields the desired result.
\end{proof}

\begin{example}
By Corollary \ref{Coro.S1p.11p.111p}, the sums $\tilde{\bms}_{q}$ can be evaluated by some known constants:
\begin{align*}
\sum_{n=1}^\infty\frac{4^n}{n^2\binom{2n}{n}}&=3\ze(2)\,,\\
\sum_{n=1}^\infty\frac{4^n}{n^3\binom{2n}{n}}&=-\frac{7}{2}\ze(3)+6\ze(2)\ln(2)\,,\\
\sum_{n=1}^\infty\frac{4^n}{n^4\binom{2n}{n}}
    &=8\Lii_4\smbb{\frac{1}{2}}-\frac{19}{4}\ze(4)+4\ze(2)\ln^2(2)+\frac{1}{3}\ln^4(2)\,.
\end{align*}
The evaluation of $\tilde{\bms}_{2}$ was presented by Sprugnoli \cite[Theorem 3.1]{Spru06}. The value of $\tilde{\bms}_{3}$ was given in Coppo and Candelpergher \cite[Example 8]{CopCan15} and Zucker \cite[Eq. (2.12)]{Zuck85}, and this formula is also equivalent to a result known to Ramanujan \cite[p. 269]{Berndt85.1}. Additionally, the formula of $\tilde{\bms}_{4}$ is equivalent to the following one on the \emph{Ramanujan constant} $G(1)$:
\[
G(1)=\frac{7}{8}\ze(3)\ln(2)-\frac{1}{384}\pi^4-\frac{1}{8}\pi^2\ln^2(2)
    +2\sum_{n=1}^{\infty}\frac{2^{2n}}{(2n)^4\binom{2n}{n}}
\]
(see \cite[Proposition 10]{CopCan15}), where
\begin{align*}
G(1):&=\sum_{n=1}^{\infty}\frac{O_n}{(2n)^3}=\frac{35}{64}\ze(4)
    -\frac{1}{2}\sum_{n=1}^{\infty}(-1)^{n-1}\frac{H_n}{n^3}\\
&=\Lii_4\smbb{\frac{1}{2}}-\frac{53}{64}\ze(4)+\frac{7}{8}\ze(3)\ln(2)
    -\frac{1}{4}\ze(2)\ln^2(2)+\frac{1}{24}\ln^4(2)\,.
\end{align*}
The readers are also referred to \cite[p. 257]{Berndt85.1} and \cite{Sit87} for more details.
\hfill\qedsymbol
\end{example}

Using Eq. (\ref{int.tn.lnm.1}) and the series expansion
\[
\arcsin^2(z)=\frac{1}{2}\sum_{n=1}^\infty\frac{{(2z)}^{2n}}{n^2\binom{2n}{n}}
\]
(see \cite[Section 1.5]{BBCGLM07}), we obtain the integral representation:
\begin{equation}
\tilde{\bms}_{q+2}=\sum_{n=1}^\infty\frac{4^n}{n^{q+2}\binom{2n}{n}}
    =\frac{(-1)^q}{q!}2^{q+2}\int_0^{\pi/2}t\ln^q(\sin t)\uud t\,,
\end{equation}
where the evaluation of the log-sine integrals on the right was established by Orr \cite[Theorem 2.2]{Orr19} most recently. Note that this integral representation is in fact the simplification of Coppo and Candelpergher's result \cite[Proposition 8]{CopCan15}. Moreover, from another point of view, Theorem \ref{Th.tSq} also gives a new formula of this kind of log-sine integrals in terms of $\ln(2)$, zeta values and alternating MZVs.

Recently, Chen \cite[Section 5]{ChenKW19} also established an explicit formula for the sums $\tilde{\bms}_{q}$:
\[
\tilde{\bms}_{q}=\sum_{n=1}^\infty\frac{4^n}{n^q\binom{2n}{n}}
    =2^q t(2,\{1\}_{q-2})=2\ze(\tilde{2},\{\tilde{1}\}_{q-2})\,.
\]
Hence, combining it with Corollary \ref{Coro.S1p.11p.111p} and Theorem \ref{Th.Sq}, we obtain the following relation
\begin{align*}
&\ze(\tilde{2},\{\tilde{1}\}_{q-2})
    -(-1)^q\ze(\tilde{2},\{\hat{1}\}_{q-2})
    +(-1)^q\ze(\bar{1},\tilde{1},\{\hat{1}\}_{q-2})\\
&\quad=\frac{1}{2}\sum_{|k|_5=q-2}G(k_1,k_2,k_3,k_4)\ze(k_5+2)\,.
\end{align*}
%%%%%%%%%%%%%%%%%%%%%%%%%%%%%%%%%%%%%%%%%%%%%%%%%%%%%%
%%%%%%%%%%%%%%%%%%%%%%%%%%%%%%%%%%%%%%%%%%%%%%%%%%%%%%
%%%%%%%%%%%%%%%%%%%%%%%%%%%%%%%%%%%%%%%%%%%%%%%%%%%%%%
%%%%%%%%%%%%%%%%%%%%%%%%%%%%%%%%%%%%%%%%%%%%%%%%%%%%%%
%%%%%%%%%%%%%%%%%%%%%%%%%%%%%%%%%%%%%%%%%%%%%%%%%%%%%%

\subsection{Linear sums $\tilde{\bms}_{1,q}$ and related sums}\label{Sec.tS1q}

By contour integrals and residue computation, more results can be obtained.

\begin{theorem}\label{Th.tS1q.tT1q}
For integer $q\geq 2$, the linear sums $\tilde{\bms}_{1,q}$ and $\tilde{\bmt}_{1,q}$ satisfy
\begin{align*}
\tilde{\bms}_{1,q}:&=\sum_{n=1}^{\infty}\frac{4^nH_n}{n^q\binom{2n}{n}}
    =\tilde{\bms}_{q+1}+\frac{(-1)^q}{2}\{\bms_{1^2,q-1}-\ze(2)\bms_{q-1}+\bms_{2,q-1}\}\\
&\quad+\frac{1}{2}\sum_{|k|_5=q-1}G(\vec{k})(k_5+1)\ze(k_5+2)
    -\frac{1}{2}\sum_{|k|_6=q-3}G(\vec{k})\ze(k_5+2)\ze(k_6+2)\,,
\end{align*}
and
\begin{align*}
\tilde{\bmt}_{1,q}:&=\sum_{n=1}^{\infty}\frac{4^nH_{2n}}{n^q\binom{2n}{n}}
    =\smbb{1-\frac{q}{2}}\tilde{\bms}_{q+1}+\ln(2)\tilde{\bms}_q
    +\frac{(-1)^q}{2}\{\bms_{1^2,q-1}-2\ze(2)\bms_{q-1}+2\bms_{2,q-1}\}\\
&\quad+\sum_{|k|_5=q-1}G(\vec{k})(k_5+1)\ze(k_5+2)
    -\frac{1}{2}\sum_{|k|_6=q-3}G(\vec{k})\ze(k_5+2)\ze(k_6+2)\,,
\end{align*}
where $k_1,\ldots,k_6\geq 0$ and $\vec{k}=(k_1,k_2,k_3,k_4)$.
\end{theorem}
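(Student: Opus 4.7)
The plan is to extend the contour-integration approach of Theorems~\ref{Th.Sq}--\ref{Th.tSq} to kernels whose residues at positive integers produce harmonic numbers. Keeping $F_0(z) = \frac{\Gamma^2(z+1)}{\Gamma(2z+1)}\frac{4^z}{z^q}$, I would work with the pair
\[
F_2(z) := (\psi(-z)+\gamma)^2 F_0(z),\qquad F_4(z) := \psi^{(1)}(-z)\, F_0(z).
\]
Exactly as in the proof of Theorem~\ref{Th.Sq}, the duplication formula and asymptotics for $\Gamma$ give $F_0(z) = O(z^{-(q-1/2)})$; since $(\psi(-z)+\gamma)^2$ grows only like $\ln^2|z|$ and $\psi^{(1)}(-z)$ like $|z|^{-1}$ on circles $|z| = \rho_k$ avoiding integer poles, both $F_2$ and $F_4$ satisfy the decay hypothesis of Lemma~\ref{Lem.Res} for $q \geq 2$, so $\sum \mathrm{Res} = 0$.

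At a positive integer $z = n$ each kernel has a double pole. Logarithmic differentiation of $F_0$ yields the key identity
\[
\frac{F_0'(n)}{F_0(n)} = 2H_n - 2H_{2n} + 2\ln(2) - \frac{q}{n},
\]
from which the Section~\ref{Sec.Lem} expansions give $\mathrm{Res}_{z=n}F_2 = F_0'(n) + 2H_n F_0(n)$ and $\mathrm{Res}_{z=n}F_4 = F_0'(n)$. Summing over $n \geq 1$ introduces $\tilde{\bms}_{1,q}$ alongside $\tilde{\bmt}_{1,q}$, $\ln(2)\tilde{\bms}_q$, and $q\,\tilde{\bms}_{q+1}$. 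At negative integers $z = -n$, $F_0$ has a simple pole with residue $(-1)^{q+1}\binom{2n}{n}/(4^n n^{q-1})$ (computed in Theorem~\ref{Th.Sq}), and the kernels are analytic there with leading values $H_{n-1}^2$ and $\zeta(2) - H_{n-1}^{(2)}$ respectively; applying $H_{n-1}^{(j)} = H_n^{(j)} - n^{-j}$ produces $\bms_{1^2,q-1}$, $\bms_{2,q-1}$, $\zeta(2)\bms_{q-1}$, together with correction terms in $\bms_{1,q}$ and $\bms_{q+1}$. At $z = 0$, both kernels have a pole of order $q+2$, and the residue is extracted by multiplying the series expansions from Lemmas~\ref{Lem.CD}--\ref{Lem.AB} by $(\psi(-z)+\gamma)^2 = z^{-2} - 2\zeta(2) - 2\zeta(3)z - \cdots$ (respectively $\psi^{(1)}(-z) = z^{-2} + \zeta(2) + 2\zeta(3)z + \cdots$) and collecting the coefficient of $z^{-1}$; this combinatorial collection produces exactly the multi-index sums $\sum_{|k|_5 = q-1} G(\vec k)(k_5+1)\zeta(k_5+2)$ and $\sum_{|k|_6 = q-3} G(\vec k)\zeta(k_5+2)\zeta(k_6+2)$ that appear in the statement.

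Applying Lemma~\ref{Lem.Res} to $F_2$ and $F_4$ then yields two linear equations in the unknowns $\tilde{\bms}_{1,q}$ and $\tilde{\bmt}_{1,q}$ with coefficient matrix $\left(\begin{smallmatrix}4 & -2\\ 2 & -2\end{smallmatrix}\right)$; subtracting isolates $\tilde{\bms}_{1,q}$ in closed form, and substituting back into the $F_2$-equation delivers $\tilde{\bmt}_{1,q}$. The final bookkeeping step rewrites the stray $\bms_{1,q}$ and $\bms_{q+1}$ correction terms via Theorem~\ref{Th.tSq} as $\tilde{\bms}_{q+1}$ plus $G(\vec k)\zeta(k_5+2)$ products; this is what produces the $(1-q/2)\tilde{\bms}_{q+1}$ coefficient in $\tilde{\bmt}_{1,q}$ and the factor-of-$2$ discrepancies between the two formulas inside the brackets $[\bms_{1^2,q-1} \mp 2\zeta(2)\bms_{q-1} + \cdots]$. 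The main technical obstacle is the residue at $z = 0$: extracting the order-$(q+2)$ coefficient requires multiplying four series (from $\Gamma^2$, $\Gamma^{-1}$, $4^z$, and the doubled $\psi$-factor) and reorganising the result as five- and six-fold index sums matching $G(\vec k)$; this is a routine but bookkeeping-intensive computation parallel to the one carried out for Theorem~\ref{Th.tSq}.
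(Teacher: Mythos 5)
Your proposal is correct and follows essentially the same route as the paper, which uses exactly the kernels $(\psi(-z)+\gamma)^2F_0(z)$ and $\psi^{(1)}(-z)F_0(z)$ (the paper calls the latter $F_3$, not $F_4$) and combines the resulting identities for $\sum 4^n(2H_n-H_{2n})/(n^q\binom{2n}{n})$ and $\sum 4^n(H_n-H_{2n})/(n^q\binom{2n}{n})$. Your residue computations at $z=n$, $z=-n$ and $z=0$, and the final elimination and rewriting via Theorems \ref{Th.Sq} and \ref{Th.tSq}, all check out against the stated formulas.
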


\begin{proof}
Setting $F_2(z):=(\psi(-z)+\ga)^2F_0(z)$ and $F_3(z):=\psi^{(1)}(-z)F_0(z)$, respectively, and performing the residue computation, we obtain the expressions of the sums
\[
\sum_{n=1}^{\infty}\frac{4^n(2H_n-H_{2n})}{n^q\binom{2n}{n}}\quad\text{and}\quad
\sum_{n=1}^{\infty}\frac{4^n(H_n-H_{2n})}{n^q\binom{2n}{n}}\,,
\]
respectively, which further give the results in this theorem.
\end{proof}

\begin{example}
By specifying the parameter $q$ in Theorem \ref{Th.tS1q.tT1q}, we have
\begin{align*}
&\sum_{n=1}^{\infty}\frac{4^nH_n}{n^2\binom{2n}{n}}
    =\frac{7}{2}\ze(3)+6\ze(2)\ln(2)\,,\\
&\sum_{n=1}^{\infty}\frac{4^nH_n}{n^3\binom{2n}{n}}
    =-8\Lii_4\smbb{\frac{1}{2}}+\ze(4)+8\ze(2)\ln^2(2)-\frac{1}{3}\ln^4(2)\,,
\end{align*}
and
\begin{align*}
\sum_{n=1}^{\infty}\frac{4^nH_{2n}}{n^2\binom{2n}{n}}
    &=\frac{35}{4}\ze(3)+3\ze(2)\ln(2)\,,\\
\sum_{n=1}^{\infty}\frac{4^nH_{2n}}{n^3\binom{2n}{n}}
    &=-20\Lii_4\smbb{\frac{1}{2}}+\frac{65}{8}\ze(4)+8\ze(2)\ln^2(2)-\frac{5}{6}\ln^4(2)\,.
\end{align*}
In addition, the next two series for the Ap\'{e}ry's constant $\ze(3)$ can be obtained:
\[
\sum_{n=1}^{\infty}\frac{4^nH_{n-1}}{n^2\binom{2n}{n}}
    =\tilde{\bms}_{1,2}-\tilde{\bms}_3=7\ze(3)\,,\quad
\sum_{n=1}^{\infty}\frac{4^nH_{2n-1}}{n^2\binom{2n}{n}}
    =\tilde{\bmt}_{1,2}-\frac{1}{2}\tilde{\bms}_3=\frac{21}{2}\ze(3)\,,
\]
which was given by Sun \cite[Remark 5.2]{Sun15.ANSR} by using Mathematica, and the first one can also be found in a recent paper due to Chen \cite[Section 5]{ChenKW19}.\hfill\qedsymbol
\end{example}

\begin{theorem}\label{Th.T1q}
For integers $q\geq 1$, the linear sums $\bmt_{1,q}$ satisfy
\begin{align*}
\bmt_{1,q}:&=\sum_{n=1}^{\infty}\frac{H_{2n}\binom{2n}{n}}{4^nn^q}
    =\ln(2)\bms_q+\frac{q}{2}\bms_{q+1}+\bms_{1,q}+\frac{(-1)^q}{2}\tilde{\bms}_{q+1}
    -\frac{1}{2}\sum_{|k|_4=q+1}H(\vec{k})\\
&\quad+2\sum_{|k|_4+2k_5=q-1}H(\vec{k})\ze(2k_5+2)
    -2\sum_{|k|_4+2k_5+2k_6=q-3}H(\vec{k})\ze(2k_5+2)\ze(2k_6+2)\,,
\end{align*}
where $k_1,\ldots,k_6\geq 0$, and
\[
H(\vec{k}):=H(k_1,k_2,k_3,k_4)
    =\frac{(-1)^{k_4}2^{k_1+k_4}\ln^{k_4}(2)C_{k_1}D_{k_2}D_{k_3}}{k_1!k_2!k_3!k_4!}\,.
\]
\end{theorem}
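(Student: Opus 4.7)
The plan is to apply Lemma \ref{Lem.Res} to the kernel
\[
F(z) := \frac{\binom{2z}{z}}{4^z z^q}\bigl[\pi\cot(\pi z)\bigr]^2
    = \frac{\Gamma(2z+1)}{\Gamma^2(z+1)\,4^z z^q}\bigl[\pi\cot(\pi z)\bigr]^2.
\]
This choice is forced by two features of the statement. First, Lemma \ref{Lem.CD} applied to the three series $\Gamma(2z+1)\ue^{2\ga z}$, $[\ue^{-\ga z}/\Gamma(z+1)]^2$ and $\ue^{-2\ln(2)z}$ shows that the Taylor expansion of $\Gamma(2z+1)/[\Gamma^2(z+1)4^z]$ at $z=0$ has coefficient of $z^j$ equal to $\sum_{|k|_4=j}H(\vec{k})$, matching the $H(\vec{k})$ sums in the statement. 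Second, the appearance of only even-argument zetas $\ze(2k_5+2)$ forces $\bigl[\pi\cot(\pi z)\bigr]^2$, whose Laurent expansion at $0$ involves only even zeta values. Via Stirling and the boundedness of $\pi\cot(\pi z)$ on circles $|z|=N+1/2$, one gets $F(z)=O(|z|^{-q-1/2})=o(|z|^{-1})$ for $q\geq 1$, so Lemma \ref{Lem.Res} applies; a separate check shows that the simple poles of $\binom{2z}{z}$ at the negative half-integers $z=-n-1/2$ are killed by the double zeros of $[\pi\cot(\pi z)]^2$ there, so only the integer poles contribute.

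Next I would compute the three families of integer residues. At $z=n\geq 1$, the factor $\binom{2z}{z}/(4^z z^q)$ is regular and $[\pi\cot(\pi z)]^2=(z-n)^{-2}-4\ze(2)+O((z-n)^2)$ has no $(z-n)^{-1}$ term, so the residue equals the derivative of $\binom{2z}{z}/(4^z z^q)$ at $n$. By logarithmic differentiation this is $\frac{\binom{2n}{n}}{4^n n^q}\smbb{2H_{2n}-2H_n-2\ln 2-q/n}$, and summing yields
\[
\sum_{n\geq 1}{\rm Res}(F,n)=2\bmt_{1,q}-2\bms_{1,q}-2\ln(2)\bms_q-q\bms_{q+1}.
\]
At $z=-n\leq -1$, $\binom{2z}{z}$ has a simple zero with leading coefficient $-1/(n\binom{2n}{n})$ while $[\pi\cot(\pi z)]^2$ has a double pole; their product has a simple pole whose residue is $(-1)^{q+1}4^n/(n^{q+1}\binom{2n}{n})$, summing to $(-1)^{q+1}\tilde{\bms}_{q+1}$.

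At $z=0$, $\binom{2z}{z}/(4^z z^q)$ has an order-$q$ pole with Taylor coefficients $a_j=\sum_{|k|_4=j}H(\vec{k})$, and
\[
[\pi\cot(\pi z)]^2=\frac{1}{z^2}-4\sum_{k\geq 1}\ze(2k)z^{2k-2}
    +4\sum_{k,l\geq 1}\ze(2k)\ze(2l)z^{2(k+l)-2}.
\]
A Laurent convolution followed by the index shifts $k_5\mapsto k_5+1$, $k_6\mapsto k_6+1$ yields
\[
{\rm Res}(F,0)=\sum_{|k|_4=q+1}H(\vec k)
    -4\sum_{|k|_4+2k_5=q-1}H(\vec k)\ze(2k_5+2)
    +4\sum_{|k|_4+2k_5+2k_6=q-3}H(\vec k)\ze(2k_5+2)\ze(2k_6+2).
\]
By Lemma \ref{Lem.Res} the three contributions sum to zero; dividing through by $2$ and solving for $\bmt_{1,q}$ produces exactly the stated formula.

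The main obstacle is the computation at $z=0$: one must verify in detail that the Taylor expansion of $\Gamma(2z+1)/[\Gamma^2(z+1)4^z]$ collapses into $H(\vec k)$ with the correct signs (the $(-1)^{k_4}$ from $\ue^{-2\ln(2)z}$) and powers of $2$ (the $2^{k_1}$ from substituting $2z$ into $\Gamma(z+1)\ue^{\ga z}$), and then to track the three nested convolution sums so that the index conditions $|k|_4+2k_5=q-1$ and $|k|_4+2k_5+2k_6=q-3$, together with the coefficients $+2$ and $-2$, emerge correctly after halving the equation.
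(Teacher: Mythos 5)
Your proposal is correct and uses exactly the kernel the paper uses, namely $F_4(z)=\pi^2\cot^2(\pi z)\,\Gamma(2z+1)/\bigl(4^z z^q\,\Gamma^2(z+1)\bigr)$, with the same three families of residues (positive integers, negative integers, and the order-$(q+2)$ pole at $0$); the paper's proof is only a one-line indication of this kernel, and your residue computations, growth estimate on the circles $|z|=N+1/2$, and the check that the half-integer poles are cancelled by the double zeros of $\cot^2(\pi z)$ all verify correctly and reproduce the stated formula.
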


\begin{proof}
In this case, let
\[
F_4(z)=\frac{\pi^2\cot^2(\pi z)}{4^zz^q}\frac{\Gamma(2z+1)}{\Gamma^2(z+1)}\,.
\]
Based on residue computation and Lemmas \ref{Lem.Res} -- \ref{Lem.AB}, we obtain this formula.
\end{proof}

\begin{example}
From Theorem \ref{Th.T1q}, we have
\begin{align*}
\sum_{n=1}^{\infty}\frac{H_{2n}\binom{2n}{n}}{4^nn}&=\frac{5}{2}\ze(2)\,,\\
\sum_{n=1}^{\infty}\frac{H_{2n}\binom{2n}{n}}{4^nn^2}
    &=\frac{23}{4}\ze(3)-5\ze(2)\ln(2)\,,\\
\sum_{n=1}^{\infty}\frac{H_{2n}\binom{2n}{n}}{4^nn^3}
    &=4\Lii_4\smbb{\frac{1}{2}}+\frac{17}{8}\ze(4)-8\ze(3)\ln(2)
        +4\ze(2)\ln^2(2)+\frac{1}{6}\ln^4(2)\,.
\end{align*}
Using these evaluations, the following series is easy to compute:
\[
\sum_{n=1}^{\infty}\frac{(H_{2n-1}-H_n)\binom{2n}{n}}{4^nn}
    =\bmt_{1,1}-\frac{1}{2}\bms_2-\bms_{1,1}=\ln^2(2)\,,
\]
which is related to a problem proposed by Knuth \cite{Knuth15} in \emph{the American Mathematical Monthly} in 2015, and was discovered by Chen \cite[Section 3]{ChenH16}.\hfill\qedsymbol
\end{example}

Finally, let $O_n=\sum_{k=1}^n\frac{1}{2k-1}$ be the odd harmonic numbers; then $O_n=H_{2n}-\frac{1}{2}H_n$, and the next corollary holds.

\begin{corollary}
The linear sums $\bmu_{1,q}$ and $\tilde{\bmu}_{1,q}$ satisfy
\begin{align*}
&\bmu_{1,q}:=\sum_{n=1}^{\infty}\frac{O_n\binom{2n}{n}}{4^nn^q}=\bmt_{1,q}-\frac{1}{2}\bms_{1,q}
    \,,\quad\text{for }q\geq 1\,,\\
&\tilde{\bmu}_{1,q}:=\sum_{n=1}^{\infty}\frac{4^nO_n}{n^q\binom{2n}{n}}
    =\tilde{\bmt}_{1,q}-\frac{1}{2}\tilde{\bms}_{1,q}
    \,,\quad\text{for }q\geq 2\,.
\end{align*}
\end{corollary}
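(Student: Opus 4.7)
The plan is to reduce both identities directly to the decomposition of the odd harmonic numbers. By splitting the sum $H_{2n}=\sum_{k=1}^{2n}\frac{1}{k}$ into its contributions from odd indices $2k-1$ and even indices $2k$ for $k=1,2,\ldots,n$, I obtain
\[
H_{2n}=\sum_{k=1}^{n}\frac{1}{2k-1}+\sum_{k=1}^{n}\frac{1}{2k}=O_n+\tfrac{1}{2}H_n\,,
\]
so that $O_n=H_{2n}-\tfrac{1}{2}H_n$, as already noted in the paragraph preceding the corollary.

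For the first identity, I would substitute $O_n=H_{2n}-\tfrac{1}{2}H_n$ into the series defining $\bmu_{1,q}$ and split by linearity:
\[
\bmu_{1,q}=\sum_{n=1}^{\infty}\frac{O_n\binom{2n}{n}}{4^nn^q}
    =\sum_{n=1}^{\infty}\frac{H_{2n}\binom{2n}{n}}{4^nn^q}
    -\frac{1}{2}\sum_{n=1}^{\infty}\frac{H_n\binom{2n}{n}}{4^nn^q}
    =\bmt_{1,q}-\tfrac{1}{2}\bms_{1,q}\,,
\]
which requires $q\geq 1$ so that both series on the right converge absolutely (both are covered by Theorems \ref{Th.Smp} and \ref{Th.T1q}). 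The second identity follows by the identical splitting applied to the series $\sum_{n\geq 1}\frac{4^nO_n}{n^q\binom{2n}{n}}$, giving $\tilde{\bmu}_{1,q}=\tilde{\bmt}_{1,q}-\tfrac{1}{2}\tilde{\bms}_{1,q}$; here the restriction $q\geq 2$ is needed to guarantee convergence of $\tilde{\bms}_{1,q}$ and $\tilde{\bmt}_{1,q}$, matching the hypotheses of Theorem \ref{Th.tS1q.tT1q}.

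There is no real obstacle here: the corollary is a direct linear combination of the two previous theorems via an elementary identity, and no further residue analysis or integration is needed. The only points worth mentioning in the write-up are the verification of the convergence ranges for $q$ and the appeal to absolute convergence to justify the term-by-term splitting of the series.
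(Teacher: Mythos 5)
Your proof is correct and follows exactly the paper's (implicit) argument: the identity $O_n=H_{2n}-\tfrac{1}{2}H_n$ noted just before the corollary, combined with linearity of the series. Nothing further is needed.
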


\begin{example}
From this corollary, we have
\begin{align*}
&\sum_{n=1}^{\infty}\frac{O_n\binom{2n}{n}}{4^nn}=\frac{3}{2}\ze(2)\,,\\
&\sum_{n=1}^{\infty}\frac{O_n\binom{2n}{n}}{4^nn^2}=\frac{7}{2}\ze(3)-3\ze(2)\ln(2)\,,\\
&\sum_{n=1}^{\infty}\frac{O_n\binom{2n}{n}}{4^nn^3}
    =\frac{15}{4}\ze(4)-7\ze(3)\ln(2)+3\ze(2)\ln^2(2)\,,
\end{align*}
where $\bmu_{1,1}$ was given by Alzer et al. \cite[Eq. (2.5)]{AlKarSri06} in a different form. Similarly, we have
\begin{align*}
\sum_{n=1}^{\infty}\frac{4^nO_n}{n^2\binom{2n}{n}}&=7\ze(3)\,,\\
\sum_{n=1}^{\infty}\frac{4^nO_n}{n^3\binom{2n}{n}}
    &=-16\Lii_4\smbb{\frac{1}{2}}+\frac{61}{8}\ze(4)+4\ze(2)\ln^2(2)-\frac{2}{3}\ln^4(2)\,.
\end{align*}
Note that the evaluation of $\tilde{\bmu}_{1,2}$ was presented in \cite[Section 5]{ChenKW19} and \cite[Eq. (9)]{CopCan15} recently, and the value of $\tilde{\bmu}_{1,3}$ is equivalent to another formula on the Ramanujan constant $G(1)$ given in \cite[Proposition 11]{CopCan15}.\hfill\qedsymbol
\end{example}

%%%%%%%%%%%%%%%%%%%%%%%%%%%%%%%%%%%%%%%%%%%%%%%%%%%%%%
%%%%%%%%%%%%%%%%%%%%%%%%%%%%%%%%%%%%%%%%%%%%%%%%%%%%%%
%%%%%%%%%%%%%%%%%%%%%%%%%%%%%%%%%%%%%%%%%%%%%%%%%%%%%%
%%%%%%%%%%%%%%%%%%%%%%%%%%%%%%%%%%%%%%%%%%%%%%%%%%%%%%
%%%%%%%%%%%%%%%%%%%%%%%%%%%%%%%%%%%%%%%%%%%%%%%%%%%%%%

\section{Further remarks}\label{Sec.Remark}

In this paper, we use iterated integrals and contour integrals to evaluate some Euler-Ap\'{e}ry-type series. Based on the results, for integers $m,p\geq 1$ and $q\geq 2$, the Euler-Ap\'{e}ry-type series
\begin{equation}\label{EAS.stu}
\bms_p\,,\ \bms_{m,p}\,,\ \bms_{1m,p}\,,\ \bms_{1^3,p}\,, \bmt_{1,p}\,,\ \bmu_{1,p}
\end{equation}
and
\begin{equation}\label{EAS.tstu}
\tilde{\bms}_{q}\,,\ \tilde{\bms}_{1,q}\,,\ \tilde{\bmt}_{1,q}\,,\ \tilde{\bmu}_{1,q}
\end{equation}
are expressible in terms of (alternating) MZVs. In particular, $\bms_p\in\mathbb{Q}[\ln(2),\ze(2),\ze(3),\ldots]$, that is, the single sums $\bms_p$ are reducible to $\ln(2)$ and zeta values. Moreover, we conjecture that all the Euler-Ap\'{e}ry-type series in (\ref{EAS.def}) can be expressed in terms of (alternating) MZVs.

In Sections \ref{Sec.Int.MZV} and \ref{Sec.Con.Int}, by specifying the parameters, we present some special Euler-Ap\'{e}ry-type series of weight $w\leq 4$. To evaluate these series automatically, we develop the Maple program \texttt{EASum} based on the corresponding explicit formulas and the multiple zeta value data mine \cite{BBV2010}. The readers can download it from \cite{WangXu19.MP.ESEA}. For example, using the commands
$\texttt{EASum([1,4,s])}$ and $\texttt{EASum([1,4,ts])}$, we obtain
\begin{align*}
\sum_{n=1}^\infty\frac{H_n\binom{2n}{n}}{4^n n^4}
    &=16\Lii_5\smbb{\frac{1}{2}}-\frac{23}{8}\ze(5)
        +\frac{13}{2}\ze(4)\ln(2)-5\ze(3)\ze(2)+2\ze(3)\ln^2(2)\\
    &\quad-\frac{4}{3}\ze(2)\ln^3(2)-\frac{2}{15}\ln^5(2)
\end{align*}
and
\[
\sum_{n=1}^{\infty}\frac{4^nH_n}{n^4\binom{2n}{n}}
    =16\Lii_5\smbb{\frac{1}{2}}-\frac{31}{2}\ze(5)+2\ze(4)\ln(2)
    +3\ze(3)\ze(2)+\frac{16}{3}\ze(2)\ln^3(2)-\frac{2}{15}\ln^5(2)\,,
\]
respectively. Similarly, when the last parameter in the command is replaced by ``\texttt{t}'', ``\texttt{tt}'', ``\texttt{u}'' and ``\texttt{tu}'', we have
\begin{align*}
&\begin{aligned}
\sum_{n=1}^{\infty}\frac{H_{2n}\binom{2n}{n}}{4^nn^4}
    &=8\Lii_5\smbb{\frac{1}{2}}+\frac{225}{16}\ze(5)-\frac{17}{4}\ze(4)\ln(2)+8\ze(3)\ln^2(2)\\
    &\quad-\frac{8}{3}\ze(2)\ln^3(2)-9\ze(3)\ze(2)-\frac{1}{15}\ln^5(2)\,,
\end{aligned}\\
&\begin{aligned}
\sum_{n=1}^{\infty}\frac{4^nH_{2n}}{n^4\binom{2n}{n}}
    &=40\Lii_5\smbb{\frac{1}{2}}-\frac{217}{16}\ze(5)-9\ze(3)\ze(2)
        +\frac{65}{4}\ze(4)\ln(2)\\
    &\quad+\frac{16}{3}\ze(2)\ln^3(2)-\frac{1}{3}\ln^5(2)\,,
\end{aligned}\\
&\sum_{n=1}^{\infty}\frac{O_n\binom{2n}{n}}{4^nn^4}
    =\frac{31}{2}\ze(5)-\frac{15}{2}\ze(4)\ln(2)-\frac{13}{2}\ze(3)\ze(2)
    +7\ze(3)\ln^2(2)-2\ze(2)\ln^3(2)\,,\\
&\begin{aligned}
\sum_{n=1}^{\infty}\frac{4^nO_n}{n^4\binom{2n}{n}}
    &=32\Lii_5\smbb{\frac{1}{2}}-\frac{93}{16}\ze(5)+\frac{61}{4}\ze(4)\ln(2)
    -\frac{21}{2}\ze(3)\ze(2)\\
    &\quad+\frac{8}{3}\ze(2)\ln^3(2)-\frac{4}{15}\ln^5(2)\,.
\end{aligned}
\end{align*}
In the attachment to \cite{WangXu19.MP.ESEA}, we present all the evaluations of the Euler-Ap\'{e}ry-type series in (\ref{EAS.stu}) and (\ref{EAS.tstu}) and of weight $w\leq 6$.

%%%%%%%%%%%%%%%%%%%%%%%%%%%%%%%%%%%%%%%%%%%%%%%%%%%%%%
%%%%%%%%%%%%%%%%%%%%%%%%%%%%%%%%%%%%%%%%%%%%%%%%%%%%%%
%%%%%%%%%%%%%%%%%%%%%%%%%%%%%%%%%%%%%%%%%%%%%%%%%%%%%%
%%%%%%%%%%%%%%%%%%%%%%%%%%%%%%%%%%%%%%%%%%%%%%%%%%%%%%
%%%%%%%%%%%%%%%%%%%%%%%%%%%%%%%%%%%%%%%%%%%%%%%%%%%%%%
\section*{Acknowledgments}

The authors would like to express their deep gratitude to Professor Masanobu Kaneko for valuable discussions and comments. The first author is supported by the National Natural Science Foundation of China (under Grant 11671360), and the Fundamental Research Funds of Zhejiang Sci-Tech University (under Grant 2019Q063). The second author is supported by the China Scholarship Council (No. 201806310063).

%%%%%%%%%%%%%%%%%%%%%%%%%%%%%%%%%%%%%%%%%%%%%%%%%%%%%%
%%%%%%%%%%%%%%%%%%%%%%%%%%%%%%%%%%%%%%%%%%%%%%%%%%%%%%
%%%%%%%%%%%%%%%%%%%%%%%%%%%%%%%%%%%%%%%%%%%%%%%%%%%%%%
%%%%%%%%%%%%%%%%%%%%%%%%%%%%%%%%%%%%%%%%%%%%%%%%%%%%%%
%%%%%%%%%%%%%%%%%%%%%%%%%%%%%%%%%%%%%%%%%%%%%%%%%%%%%%

\end{document}